\newtheorem{theorem}{Theorem}
\newtheorem{lemma}[theorem]{Lemma}
\newcommand{\bs}{\backslash}
\renewcommand{\d}{\delta}
\newcommand{\D}{\Delta}
\newcommand{\ra}{\rightarrow}
\newcommand{\f}{\frac}
\newcommand{\al}{\alpha}
\newcommand{\g}{\gamma}
\renewcommand{\l}{\lambda}
\newcommand{\be}{\begin{equation}}
\renewcommand{\ra}{\rightarrow}
\newcommand{\ee}{\end{equation}}
\newcommand{\bea}{\begin{eqnarray}}
\newcommand{\eea}{\end{eqnarray}}
\newcommand{\bna}{\begin{eqnarray*}}
\newcommand{\ena}{\end{eqnarray*}}
\renewcommand{\O}{\Omega}
\renewcommand{\le}{\left}
\newcommand{\ri}{\right}
\newcommand{\ve}{\epsilon}
\newcommand{\msh}{\mathscr{H}}
\newcommand{\mss}{\mathscr{S}}
\newcommand{\na}{\nabla}
\newcommand{\B}{\mathbb{B}}
\newcommand{\ba}{\beta}
\newcommand{\ib}{\int_{\mathbb{B}}}
\newcommand{\iy}{\infty}
\journal{***}
\begin{document}

\begin{frontmatter}

\title{Extremal functions for a singular Hardy-Moser-Trudinger inequality}

\author{Songbo Hou}
\ead{housb10@163.com}
\address{Department of Applied Mathematics, College of Science, China Agricultural University,  Beijing, 100083, P.R. China}

\begin{abstract}
In this paper, using blow-up analysis, we prove a singular Hardy-Morser-Trudinger inequality, and find its
 extremal functions. Our results extend  those of Wang-Ye (Adv. Math. 2012), Yang-Zhu ( Ann. Glob. Anal. Geom. 2016), Csat\'{o}- Roy  (Calc. Var. 2015),  and Yang-Zhu (J. Funct. Anal. 2017).
\end{abstract}

\begin{keyword}
singular Trudinger-Moser inequality\sep extremal function\sep blow-up analysis

\MSC [2010] 46E35

\end{keyword}

\end{frontmatter}

\section{Introduction and main results}
Let $\O\subset \mathbb{R}^2$ be a smooth bounded domain, and $W^{1, 2}_0(\O)$ be the usual Sobolev  space.  The classical Moser-Trudinger inequality  \cite{Mos71, Tru67} states that
\be\label{mti}
\sup_{u\in W^{1,2}_0 (\O),\,||\na u||_2\leq 1}\int_{\O}e^{\g u^2}dx <\iy,
\ee
 for any $\g\leq 4\pi$. If $\g>4\pi$, then the supremum is infinite although the integrals in (\ref{mti}) are finite. In this sense, the inequality (\ref{mti}) is sharp. It plays an important role in geometric analysis and partial differential equations. Let $\l_1({\O})$ be the first eigenvalue of the Laplacian operator with respect to Dirichlet boundary condition. Adimurthi and O. Druet \cite{AD04} proved that for any $\al$, $0\leq \al <\l_1(\O)$,
 \be\label{adi}
 \sup_{u\in W^{1,2}_0 (\O),\,||\na u||_2\leq 1}\int_{\O}e^{4\pi u^2(1+\al ||u||^2_2)}dx <\iy,
 \ee
whereas for any $\al\geq \l_1(\O)$, the above supremum is infinity. The analogs of (\ref{adi}) were obtained on a compact Riemannian surface \cite{Yang06} and
on a high dimeniosnal Euclidean domain \cite{Yang07}. Clearly, the inequality (\ref{adi}) is stronger than the inequality (\ref{mti}).
C. Tintarev \cite{Tin14} obtained an improvement of Moser-Trudinger inequality as the following:
\be\label{tine}
\sup_{\int_{\O}|\na u|^2dx-\int_{\O}V(x)u^2dx\leq 1, u\in C_0^{\infty}(\O)}\int_{\O}e^{4\pi u^2}dx<\iy,
\ee
where $V(x)>0$ is a specific class of potentials. Let us write for any $0\leq \al<\l_1(\O)$,
\be\label{und}
||u||_{1,\al}=\le(\int_{\O}|\na u|^2dx-\al\int_{\O}u^2dx\ri)^{1/2}.\ee
Then a special case of (\ref{tine}) is the following:
\be\label{tsc}
\sup_{u\in W^{1,2}_0(\O),\,||u||_{1,\al}\leq 1}\int_{\O}e^{4\pi u^2}dx<\iy,
\ee
 where $0\leq \al<\l_1(\O)$. Note that the inequality (\ref{tsc}) is stronger than (\ref{adi}). It was shown by Y. Yang \cite{Yang15}
 that the supremum in (\ref{tsc})
can be attained by some function $u_0\in W^{1,2}_0(\O)\cap C^1(\overline{\O})$
 with $||u_0||_{1,\al}=1$.\\

For singular Moser-Trudinger inequalities, Adimurthi and K. Sandeep \cite{A-S} proved that for  any $\ba$, $0\leq \ba <1$, there holds
\be\label{asi}
\sup_{u\in W^{1,2}_0(\O),\,||\na u||_{2}\leq 1}\int_{\O}\f{e^{4\pi (1-\ba )u^2}}{|x|^{2\ba}}dx<\iy.
\ee
An analog of (\ref{asi}) was established by Adimurthi and Y. Yang \cite {AY10} in the entire Euclidean space. The existence of  the extremal function of (\ref{asi}) was proved by G. Csat\'{o} and  P. Roy  \cite{CSP15}. Let $\B\subset\mathbb{R}^2$ be the standard unit disc. Yuan and  Zhu
\cite{YuZ} proposed the following  Adimurthi-Druet type inequality:
\be\label{yzi}
\sup_{u\in W^{1,2}_0(\B),\,||\na u||_2\leq 1}\ib\f{e^{4\pi(1-\ba)u^2(1+\al ||u||^2_{2,\ba})}}{|x|^{2\ba}}dx<\iy,
\ee
 where $0\leq \ba <1$, $||u||_{2,\ba}=\le(\ib|x|^{-2\ba}u^2dx\ri)^{1/2}$, $0\leq \al <\l_{1, \ba}(\B)$ with
 $$\l_{1,\ba}(\B)=\inf_{u\in W^{1,2}_0(\B), ||u||_{2,\ba}=1}\ib |\na u|^2dx.$$
 Recently,  (\ref{asi}) was generalized by Y. Yang and X. Zhu \cite{YZ16} to the following: there holds for any
 $\alpha$, $0\leq\alpha<\lambda_1(\Omega)$,
 \be\label{yzi}
 \sup_{u\in W^{1,2}_0(\O),\,||u||_{1,\al}\leq 1}\int_{\O}\f{e^{4\pi (1-\ba )u^2}}{|x|^{2\ba}}dx<\iy,
 \ee
where $||u||_{1,\al}$ is defined as in (\ref{und}).  Moreover,  the above supremum can be attained. A related result can be found in \cite{LY16}.

Another important inequality in analysis is the Hardy inequality, which says that
\be\label{hari}
\ib|\na u|^2dx-\ib\f{u^2}{(1-|x|^2)^2}dx\geq 0,\quad\forall u\in W^{1,2}_0(\B), \ee
where $\B$ is the unit disc in $\mathbb{R}^2$.
It was proved by H. Brezis and M. Marcus \cite{HM97} that there exists a constant $C>0$  such that
  \be\label{bmmi}
  \ib|\na u|^2dx-\ib\f{u^2}{(1-|x|^2)^2}dx\geq C\ib u^2dx,\quad\forall u\in W^{1,2}_0(\B).
  \ee
Hence
$$||u||_{\msh}=\left(\int_{\mathbb{B}}|\na u|^2dx-\int_{\mathbb{B}}\f{u^2}{(1-|x|^2)^2}dx\right)^{1/2}$$
define a norm over $W^{1,2}_0(\B)$. Denote by $\msh$  the completion of $C_0^{\iy}(\B)$ under the norm $||\cdot||_\msh$. Then $\msh$ is a Hilbert space.
It was observed by Manchi-Sandeep \cite {MS08} and Wang-Ye \cite{GY12} that
$$W^{1,2}_0(\B)\subset \msh \subset \cap_{p\geq 1}L^p(\B).$$
  A Hardy-Moser-Trudinger inequality was first established by  G. Wang and D. Ye \cite{GY12}:
\be \label{wyi}
\sup_{u\in \msh,\, ||u||_\msh\leq 1} \ib e^{4\pi u^2}dx<\iy.
\ee
Moreover,  the above supremum can be attained.

We slightly abuse some notations and write
$$\l_1(\B)=\inf_{u\in\msh,u\not\equiv 0}\f{||u||^2_\msh}{||u||^2_2},$$
and $$||u||_{\msh,\al}=\le(||u||_\msh-\al ||u||^2_2\ri)^{1/2},$$ where $0\leq \al <\l_1(\mathbb{B})$.
In \cite{YZ15}, Yang and  Zhu improved the result of G. Wang and D. Ye as below:

\be\label{yzi}
\sup_{u\in \msh,\, ||u||_{\msh, \al}\leq 1} \ib e^{4\pi u^2}dx<\iy.\ee
Moreover, the extremal function for the above supremum exists.\\

 Our aim is to extend (\ref{yzi}) to a singular version of the Hardy-Moser-Trudinger inequality. Now, 
 the main result of this paper
 can be stated as follows:
\begin{theorem} \label{mai1} Let $0\leq\ba<1$ be fixed.  Then for  any $\al$, $0\leq\al<\l_1(\B)$, there holds
\be\label{hyi}\sup_{u\in \msh,\, ||u||_{\msh, \al}\leq 1} \ib \f{e^{4\pi (1-\ba )u^2}}{|x|^{2\ba}}dx<\iy.\ee
Furthermore the supremum can be achieved by some function $u_0\in \msh$ with $||u_0||_{\msh,\al}=1$.
\end{theorem}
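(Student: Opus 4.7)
The plan is to follow the blow-up scheme of Carleson--Chang and Ding--Jost--Li--Wang, in the form used by Yang--Zhu for (\ref{yzi}) and Csat\'{o}--Roy for (\ref{asi}), adapted to accommodate simultaneously the Hardy potential $(1-|x|^2)^{-2}$, the singular weight $|x|^{-2\ba}$ and the $\al\ib u^2\,dx$ correction. The starting point is the subcritical problem: for each $\ve\in(0,4\pi(1-\ba))$, maximize $\ib|x|^{-2\ba}e^{\ve u^2}dx$ over $\{u\in\msh:||u||_{\msh,\al}\leq 1\}$. The embedding $\msh\hookrightarrow L^p(\B)$ for every finite $p$ (guaranteed by (\ref{bmmi})) and the Vitali convergence theorem applied to the subcritical exponential yield a smooth positive maximizer $u_\ve$ with $||u_\ve||_{\msh,\al}=1$, solving the Euler--Lagrange equation
$$-\la u_\ve-\f{u_\ve}{(1-|x|^2)^2}-\al u_\ve=\f{1}{\l_\ve}\f{u_\ve\,e^{\ve u_\ve^2}}{|x|^{2\ba}},\qquad \l_\ve=\ib\f{u_\ve^2\,e^{\ve u_\ve^2}}{|x|^{2\ba}}dx.$$

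Next, let $\ve\ra 4\pi(1-\ba)^-$ and set $c_\ve=\max_{\overline{\B}}u_\ve$, attained at some $x_\ve$. If $c_\ve$ stays bounded, standard elliptic regularity promotes the weak $\msh$-limit of $u_\ve$ to the desired extremal $u_0$, so assume $c_\ve\ra\iy$. The singular weight $|x|^{-2\ba}$ drives the concentration point to the origin (otherwise the non-singular analysis \cite{YZ15} applies away from $0$). Rescaling by $r_\ve$ defined through $(c_\ve^2/\l_\ve)\,r_\ve^{2(1-\ba)}\,e^{\ve c_\ve^2}=1$ and studying $\varphi_\ve(y)=c_\ve(u_\ve(r_\ve y)-c_\ve)$ and $\psi_\ve(y)=u_\ve(r_\ve y)/c_\ve$, one obtains $\psi_\ve\ra 1$ and $\varphi_\ve\ra\varphi$ in $C^1_{\mathrm{loc}}(\mathbb{R}^2)$, where $\varphi$ is the singular Chen--Li bubble solving $-\la\varphi=|y|^{-2\ba}e^{8\pi(1-\ba)\varphi}$ with $\varphi(0)=0$ and $\int_{\mathbb{R}^2}|y|^{-2\ba}e^{8\pi(1-\ba)\varphi}dy=1$. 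A capacity estimate on the neck $\{r_\ve\leq|y|\leq Rr_\ve\}$ combined with the Brezis--Merle lemma on $\B\bs B_{Rr_\ve}(0)$ and a passage to the limit in the weak limit's asymptotics then yields a sharp Carleson--Chang-type upper bound
$$\sup_{||u||_{\msh,\al}\leq 1}\ib\f{e^{4\pi(1-\ba)u^2}}{|x|^{2\ba}}dx\leq \f{\pi}{1-\ba}\,e^{\,4\pi(1-\ba)A_0+1},$$
where $A_0$ is the regular part at $0$ of the Green function of $-\la-(1-|x|^2)^{-2}-\al$ with Dirichlet data.

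The hardest step will be constructing test functions $\phi_r\in\msh$ with $||\phi_r||_{\msh,\al}=1$ whose exponential integral strictly exceeds the above upper bound, thereby ruling out blow-up. I would glue a rescaled Chen--Li bubble on $B_{Rr}(0)$ to the Green function $G$ outside, and expand carefully in the small parameter $r$. The delicacy is threefold: the expansion must be sharp near $\p\B$ to control the Hardy term $\ib u^2/(1-|x|^2)^2\,dx$ to order $o(1/|\log r|)$, correctly match the $|x|^{-2\ba}$ weight at the origin, and absorb the $\al\,||u||_2^2$ correction into the normalization constant. The fine analysis of $G$ near its pole -- in particular of $A_0$, which depends nontrivially on both the Hardy potential and on $\al$ -- is the technical heart of the proof; once its expansion is in hand, the resulting strict inequality contradicts the blow-up upper bound, forcing $c_\ve$ to remain bounded and producing the extremal $u_0\in\msh$ with $||u_0||_{\msh,\al}=1$.
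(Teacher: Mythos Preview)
Your overall blow-up strategy matches the paper's, but three points deserve comment.

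\textbf{Radial reduction.} The paper begins by passing from $\msh$ to the cone $\mss$ of radially nonincreasing functions via hyperbolic rearrangement (Lemma~\ref{GYL} from Wang--Ye). This is not merely cosmetic: the compact embedding $\mss\hookrightarrow L^p(\B)$ is what makes the subcritical maximization go through cleanly, and radiality forces $c_\ve=u_\ve(0)$ so that the blow-up point is automatically at the singularity. You instead work directly in $\msh$, appeal to Vitali for subcritical compactness, and argue separately that $x_\ve\to 0$. This can be made to work, but it is more delicate than you indicate; in particular the compact embedding of all of $\msh$ (not just $\mss$) into $L^p$ is not stated in the cited literature, and your one-line dismissal of the case $x_\ve\not\to 0$ hides real work.

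\textbf{Upper bound: method and constant.} The paper obtains the upper bound via the Iula--Mancini singular Onofri inequality (Lemma~\ref{IML16}) rather than a capacity estimate; the authors explicitly flag this as a departure from the capacity approach of \cite{YZ15}. More importantly, your stated bound
\[
\sup_{||u||_{\msh,\al}\leq 1}\ib\f{e^{4\pi(1-\ba)u^2}}{|x|^{2\ba}}dx\leq \f{\pi}{1-\ba}\,e^{\,4\pi(1-\ba)A_0+1}
\]
is missing the volume term. The correct bound (Lemma~\ref{ubd}) is
\[
\f{\pi}{1-\ba}\Bigl(1+e^{1+4\pi(1-\ba)A_0}\Bigr),
\]
since the non-concentrating part of the functional contributes $\ib|x|^{-2\ba}dx=\pi/(1-\ba)$. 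Your version cannot be correct as stated (it can fall below $\pi/(1-\ba)$), and the test-function step must be matched against the right constant.

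\textbf{Regularity at the origin.} Because of the factor $|x|^{-2\ba}$ in the equation, $u_\ve$ is only $C^\infty(\B\setminus\{0\})\cap C^0(\overline\B)$, and the bubble convergence is in $C^1_{\rm loc}(\mathbb{R}^2\setminus\{0\})\cap C^0_{\rm loc}(\mathbb{R}^2)$, not $C^1_{\rm loc}(\mathbb{R}^2)$ as you write.
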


 When $\ba=0$, the inequality (\ref{hyi}) is reduced to (\ref{yzi}). The existence of extremal functions for Moser-Trudinger inequality originated in \cite{CC86}. This result was generalized by  M. Struwe \cite{str88}, F. Flucher \cite{flu92}, K. Lin \cite{Lin96}, W. Ding, J. Jost, J. Li and G. Wang \cite{DJL97}, Adimurthi and M. Struwe \cite{AST00}, Y. Li \cite{Li01}, Adimurthi and O. Druet \cite{AD04},  and so on. Compared with \cite{YZ15}, there are difficulties caused by the term $|x|^{-2\ba}$ in the process of blow-up analysis. Here we employ a  classification theorem of W. Chen and C. Li \cite{CL95} which was also used in \cite{YZ16}, while  another
  classification result \cite{CL91} was also used in \cite{YZ15}. We derive an upper bound of the Hardy-Moser-Trudinger functionals by Onofri's inequality (\cite {IM16}, Theorem 1.1 ), while an upper bound was obtained via the capacity estimate in \cite{YZ15}. The proof of Theorem 1 is composed of three steps. The first step is to reduce the problem on radially non-increasing functions and derive the associated Euler-Lagrange equation. The second step is the blow-up analysis. If the blow-up occurs, we analyze the asymptotic behavior of maximizing sequences near and away from  the blow-up point. Then we estimate an upper bound of subcritical functionals. The final spep is to construct test functions and get a contradiction with the upper bound derived in the previous step, which implies that blow-up can not occur.  This completes the proof of Theorem \ref{mai1}.
\section{Proof of main results}\label{Sec 2}

\subsection{The singular subcritical functionals}
In this subsection, we will  prove the existence of  the maximizers of subcritical functions. We recall  Wang-Ye's result for our use later:

\begin{lemma} \label{GYL}(Wang-Ye \cite{GY12}) Let
$$\mss_0=\le\{u\in C_0^{\iy}(\B):u(x)=u(r)\,\,{\rm with}\,\,r=|x|,u'\leq 0\ri\}$$ and $\mss$ be the closure of $\mss_0$ in $\msh$. Then $\mss$ is embedded  continuously in $W^{1,2}_{\rm loc}(\B)\cap C^{0,\f{1}{2}}_{\rm loc}(\B\bs \{0\})$. Moreover, for any $p\geq 1$, $\mss$ is embedded compactly in $L^p(\B)$.
\end{lemma}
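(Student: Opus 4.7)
The plan is to apply the conformal-type substitution $u=\sigma v$ with $\sigma(x)=\sqrt{1-|x|^2}$, which linearizes the Hardy weight and converts the $\msh$-norm into a tractable weighted Sobolev norm; the three embeddings will then follow from standard tools (Rellich compactness, $1$D Sobolev embedding, classical Moser--Trudinger) applied to the transformed variable $v$.

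For $u\in\mss_0$, the function $v=u/\sigma$ lies in $C_c^{\iy}(\B)$ since $\sigma\in C^{\iy}(\overline\B)$ is strictly positive on $\B$. Expanding $|\na u|^2=\sigma^2|\na v|^2+\sigma\na(v^2)\cdot\na\sigma+v^2|\na\sigma|^2$ and integrating the middle term by parts yields
$$||u||_\msh^2=\ib\sigma^2|\na v|^2\,dx+\ib v^2\,\sigma(-\D\sigma-V\sigma)\,dx,\qquad V(x)=\tfrac{1}{(1-|x|^2)^2}.$$
A direct radial computation gives $-\D\sigma-V\sigma=\sigma^{-1}$, hence
$$||u||_\msh^2=\ib(1-|x|^2)|\na v|^2\,dx+\ib v^2\,dx,$$
and this identity extends to all of $\mss$ by density. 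On any compact $K\subset\B$ the weight $(1-|x|^2)$ is bounded below, so $||\na v||_{L^2(K)}\le C_K||u||_\msh$, and using $u=\sigma v$ with $\sigma\in C^{\iy}(\overline\B)$ this gives $\mss\hookrightarrow W^{1,2}_{\rm loc}(\B)$. For the Hölder claim, a compact $K\subset\B\bs\{0\}$ lies in some annulus $\{\rho\le|x|\le 1-\rho\}$, and radial symmetry together with Cauchy--Schwarz produces
$$|u(r_1)-u(r_2)|\le|r_1-r_2|^{1/2}\Bigl(\int_{r_1}^{r_2}(u'(s))^2\,ds\Bigr)^{1/2}\le C_\rho||u||_\msh|r_1-r_2|^{1/2},$$
where the second inequality uses $\int_\rho^{1-\rho}(u')^2\,ds\le\rho^{-1}\int_\rho^{1-\rho}(u')^2 s\,ds\le C_\rho||u||_\msh^2$ from the $W^{1,2}_{\rm loc}$ bound.

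For the compact $L^p$ embedding, I combine a radial pointwise bound with a Moser--Trudinger estimate near the origin. The Brezis--Marcus inequality gives $||u||_2\le C||u||_\msh$, and the elementary rearrangement bound $\pi r^2 u(r)^2\le||u||_2^2$ for radial non-increasing $u$ then yields $u(r)\le C||u||_\msh/r$, so any $\mss$-bounded sequence $\{u_n\}$ is uniformly bounded on every annulus $\{|x|\ge\rho\}$. Near the origin, the identity shows $v_n$ is bounded in $W^{1,2}(B_{1/2})$, so applying the classical Moser--Trudinger inequality to $\phi v_n$ (with a fixed cutoff $\phi\in C_c^{\iy}(B_{1/2})$, $\phi\equiv 1$ on $B_{1/4}$) gives $\int_{B_{1/4}}e^{\gamma v_n^2}dx\le C$, hence uniform $L^q$ bounds on $v_n$, and therefore on $u_n=\sigma v_n$ with $\sigma\le 1$, for every $q\ge 1$. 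Extracting a subsequence via Rellich applied to the $W^{1,2}_{\rm loc}$ bound gives $L^p_{\rm loc}(\B)$-convergence on compact subsets of $\B$; the uniform $L^{p+1}$ bound then makes the tails negligible (near $0$ by Hölder against the small volume, near $\p\B$ via the pointwise bound together with $|\B\bs B_{1-\delta}|\to 0$), upgrading the convergence to $L^p(\B)$ for every $p\ge 1$.

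The main obstacle is the derivation and density-extension of the substitution identity; once the algebraic identity $-\D\sigma-V\sigma=\sigma^{-1}$ is verified and $u=\sigma v$ is recognized as an isomorphism between $\mss$ and the weighted Sobolev space defined by the right-hand side of the identity, the remaining arguments reduce to standard applications of Rellich, $1$D Sobolev embedding, and classical Moser--Trudinger to the transformed variable $v$.
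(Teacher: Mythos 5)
Your proof is correct. Note that the paper itself gives no proof of this lemma --- it is quoted verbatim from Wang--Ye \cite{GY12} --- and your ground-state substitution $u=\sqrt{1-|x|^2}\,v$, with the verified identity $-\D\sigma-V\sigma=\sigma^{-1}$ turning $||u||_{\msh}^2$ into $\int_{\B}(1-|x|^2)|\na v|^2dx+\int_{\B}v^2dx$, is essentially the same device that underlies the treatment in that reference (it also immediately recovers the Brezis--Marcus inequality (\ref{bmmi})). The subsequent steps (local ellipticity of the weight for $W^{1,2}_{\rm loc}$, the one-dimensional Cauchy--Schwarz bound for $C^{0,1/2}_{\rm loc}(\B\bs\{0\})$, the radial pointwise bound $u(r)\leq C||u||_{\msh}/r$ plus uniform $L^q$ control of $v$ near the origin, and the Rellich-plus-tail argument for compactness in $L^p(\B)$) are all sound; the only cosmetic remark is that near the origin the uniform $L^q$ bounds on $v_n$ already follow from the two-dimensional Sobolev embedding of $W^{1,2}(\B_{1/2})$, so the appeal to Moser--Trudinger there, while valid, is not needed.
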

Then we  perform  variation in $\mss$ instead of $\msh$ and get the following:
\begin{lemma}\label{Lemma 1} Assume $0\leq \alpha\leq \l_1({\mathbb{B}})$. Then for any $\ve$, $0<\ve<1-\beta$,
there exists some  $u_{\ve}\in \mathscr{S}\cap C^{\infty}(\mathbb{B}\backslash\{0\})\cap C^{0}(\overline{\mathbb{B}})$ such that
$||u_{\ve}||_{\msh,\alpha}=1$ and
\be\label{le1}
\int_{\mathbb{B}}\f{e^{4\pi(1-\beta-\ve)u^{2}_{\ve}}}{|x|^{2\beta}}dx=\sup_{u\in\msh,\,||u||_{\msh,\alpha}\leq 1}\int_{\mathbb{B}}\f{e^{4\pi(1-\beta-\ve)u^{2}}}{|x|^{2\beta}}dx.
\ee

\end{lemma}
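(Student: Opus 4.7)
The plan is to apply the direct method of the calculus of variations inside the symmetric class $\mss$. As a preliminary reduction, for any admissible $u\in\msh$ one can construct a radial non-increasing rearrangement $u^{\#}\in\mss$ (as in \cite{GY12, YZ15}) satisfying $||u^{\#}||_{\msh,\al}\leq||u||_{\msh,\al}$ and
\[
\ib\f{e^{4\pi(1-\ba-\ve)(u^{\#})^{2}}}{|x|^{2\ba}}dx\geq\ib\f{e^{4\pi(1-\ba-\ve)u^{2}}}{|x|^{2\ba}}dx,
\]
the latter coming from the Hardy-Littlewood rearrangement inequality since $|x|^{-2\ba}$ is radially decreasing and $e^{\g t^{2}}$ is monotone in $t^{2}$. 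Hence the supremum over $\msh$ is attained on $\mss$, and I may work in $\mss$ throughout.

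Next, take a maximizing sequence $u_{k}\in\mss$ with $||u_{k}||_{\msh,\al}=1$. Since $0\leq\al<\l_{1}(\B)$, the norms $||\cdot||_{\msh}$ and $||\cdot||_{\msh,\al}$ are equivalent, so $\{u_{k}\}$ is bounded in $\msh$. By Lemma \ref{GYL}, up to a subsequence one obtains $u_{\ve}\in\mss$ with $u_{k}\rightharpoonup u_{\ve}$ weakly in $\msh$, $u_{k}\to u_{\ve}$ strongly in every $L^{p}(\B)$, and locally uniformly on $\B\bs\{0\}$. The critical step, convergence of the weighted exponential integral, I would handle through Vitali's convergence theorem. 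By subcriticality one may fix H\"{o}lder conjugates $q,q'>1$ with $q<1/\ba$ (so that $\ib|x|^{-2\ba q}dx<\iy$) and $(1-\ba-\ve)q'<1$, the second condition being achievable for any $\ve>0$ since letting $q\uparrow 1/\ba$ forces $q'\downarrow 1/(1-\ba)$ and $(1-\ba-\ve)/(1-\ba)<1$. H\"{o}lder's inequality then gives
\[
\ib\f{e^{4\pi(1-\ba-\ve)u_{k}^{2}}}{|x|^{2\ba}}dx\leq\le(\ib|x|^{-2\ba q}dx\ri)^{1/q}\le(\ib e^{4\pi(1-\ba-\ve)q'u_{k}^{2}}dx\ri)^{1/q'},
\]
whose second factor is uniformly bounded by the Hardy-Moser-Trudinger inequality (\ref{yzi}) of Yang-Zhu \cite{YZ15} applied to $\sqrt{(1-\ba-\ve)q'}\,u_{k}$, which has norm strictly less than $1$. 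Performing the same estimate with $\ve$ replaced by $\ve/2$ upgrades this uniform bound to equi-integrability, so Vitali yields convergence of the functional. Weak lower semicontinuity then forces $||u_{\ve}||_{\msh,\al}\leq 1$, with equality because otherwise the rescaled function $u_{\ve}/||u_{\ve}||_{\msh,\al}$ would strictly enlarge the functional.

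Finally, $u_{\ve}$ satisfies the Euler-Lagrange equation
\[
-\D u_{\ve}-\f{u_{\ve}}{(1-|x|^{2})^{2}}-\al u_{\ve}=\mu\,\f{u_{\ve}e^{4\pi(1-\ba-\ve)u_{\ve}^{2}}}{|x|^{2\ba}}\quad\mbox{in }\B,\qquad u_{\ve}|_{\p\B}=0,
\]
for some Lagrange multiplier $\mu>0$. Standard elliptic regularity on $\B\bs\{0\}$, where the right-hand side is smooth and the Hardy coefficient bounded on compact subsets, yields $u_{\ve}\in C^{\iy}(\B\bs\{0\})$; continuity up to $\p\B$ follows as in \cite{YZ15}, giving $u_{\ve}\in C^{0}(\overline{\B})$. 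The principal technical obstacle lies in the Vitali step, where the H\"{o}lder exponents must be delicately balanced so that $|x|^{-2\ba}$ stays integrable near the origin while the exponent in the Moser-Trudinger factor remains strictly below the critical threshold $4\pi$, a balance made possible precisely by the gap $\ve>0$.
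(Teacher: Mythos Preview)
Your proof is correct and follows the same skeleton as the paper (symmetrization to $\mss$, direct method, Euler--Lagrange, elliptic regularity), but the key compactness step is handled by a genuinely different route. The paper first proves the subcritical finiteness bound
\[
\sup_{u\in\mss,\,\|u\|_\msh\leq 1}\int_\B |x|^{-2\ba}e^{4\pi(1-\ba-\ve)u^2}\,dx<\infty
\]
from scratch by adapting Wang--Ye's truncation argument, and then obtains an $L^p$ bound ($p>1$) on the integrand through a H\"older decomposition that separates $u_j-u_\ve$ from $u_\ve$ (using $\|u_j-u_\ve\|_\msh^2\leq 1-\|u_\ve\|_{\msh,\al}^2+o_j(1)$), invoking both this subcritical bound and the exponential integrability $\int_\B e^{\gamma u_\ve^2}dx<\infty$ for $u_\ve\in\mss$. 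Your route is more direct: you H\"older-separate the weight $|x|^{-2\ba}$ from the exponential and feed the exponential factor straight into Yang--Zhu's inequality (\ref{yzi}) as a black box, exploiting the gap $(1-\ba-\ve)q'<1$ available for $q$ just below $1/\ba$. This is shorter and avoids reproving the subcritical bound; on the other hand, the paper's approach is more self-contained and produces that subcritical estimate as a byproduct---an estimate which is not disposable, since it is invoked again in the blow-up analysis (see (\ref{rve})). One small remark: your appeal to ``$\ve/2$'' for equi-integrability is unnecessary---restricting your H\"older inequality to an arbitrary measurable $E\subset\B$ already gives $\int_E f_k\leq C\big(\int_E |x|^{-2\ba q}dx\big)^{1/q}\to 0$ as $|E|\to 0$, since $|x|^{-2\ba q}\in L^1(\B)$, and Vitali then applies directly.
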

\begin{proof}
First, we  prove that
\be\label{sup}
\sup_{u\in\msh,\,||u||_{\msh,\alpha}\leq 1}\int_{\mathbb{B}}\f{e^{4\pi(1-\beta-\ve)u^{2}}}{|x|^{2\beta}}dx=
\sup_{u\in\mathscr{S},\,||u||_{\msh,\alpha}\leq 1}\int_{\mathbb{B}}\f{e^{4\pi(1-\beta-\ve)u^{2}}}{|x|^{2\beta}}dx,
\ee
which reduces our problem on radially symmetric functions.

For any $u\in C_0^{\infty}({\mathbb{B}})$, denote by $u^{*}$ the radially nonincreasing rearrangement of $u$ with respect to the
standard  hyperbolic metric $d\mu=\f{1}{(1-|x|^2)^2}dx$. The argument in \cite{Ba94} leads to
$$\int_{\mathbb{B}}|\na u^*|^2dx\leq \int_{\mathbb{B}}|\na u|^2dx,$$
$$\ib {u^{*}}^2dx=\ib u^2dx$$ and
$$\int_{\mathbb{B}}\f{{u^{*}}^2}{(1-|x|^2)^2}dx=\int_{\mathbb{B}}\f{u^{2}}{(1-|x|^2)^2}dx.$$
Thus, $||u||_{\msh,\alpha}\leq 1$ implies $||u^*||_{\msh,\alpha}\leq 1$. Using the  Hardy-Littlehood inequality and noticing that the rearrangement of $\f{(1-|x|^2)^2}{|x|^{2\beta}}$ is just itself, we get
\bna
\int_{\mathbb{B}}\f{e^{4\pi(1-\beta-\ve)u^{2}}}{|x|^{2\beta}}dx &=& \int_{\mathbb{B}}e^{4\pi(1-\beta-\ve)u^{2}}\f{(1-|x|^2)^2}{|x|^{2\beta}}d\mu\\
&\leq & \int_{\mathbb{B}}e^{4\pi(1-\beta-\ve){u^{*}}^2}\left(\f{(1-|x|^2)^2}{|x|^{2\beta}}\right)^* d\mu\\
&=& \int_{\mathbb{B}}e^{4\pi(1-\beta-\ve){u^{*}}^2}\f{(1-|x|^2)^2}{|x|^{2\beta}} d\mu\\
&=&\int_{\mathbb{B}}\f{e^{4\pi(1-\beta-\ve){u^{*}}^2}}{|x|^{2\beta}}dx.
\ena
Thus,
$$\sup_{u\in C_0^{\infty}({\mathbb{B}}),\,||u||_{\msh,\alpha}\leq 1}\int_{\mathbb{B}}\f{e^{4\pi(1-\beta-\ve)u^{2}}}{|x|^{2\beta}}dx \leq
\sup_{u\in\mathscr{S},\,||u||_{\msh,\alpha}\leq 1}\int_{\mathbb{B}}\f{e^{4\pi(1-\beta-\ve)u^{2}}}{|x|^{2\beta}}dx.$$
Combining the density of $C_0^{\infty}({\mathbb{B}})$ in $\msh$, we see that (\ref{sup}) holds.

Next,  we prove that for any $\ba$, $0\leq \ba<1$ and any $\ve$, $0<\ve<1-\ba$, there holds
\be\label{sub}
\sup_{u\in\mathscr{S},\,||u||_{\msh}\leq 1}\int_{\mathbb{B}}\f{e^{4\pi(1-\ba-\ve)u^{2}}}{|x|^{2\ba}}dx<+\infty.
\ee
We modify the proof of Theorem 3 in \cite{GY12}. Let $u\in \mss_0$. Define $$A_{u}(r)=\f{1}{\pi r^2}\int_{\mathbb{B}_r}\f{u^2}{(1-|x|^2)^2}dx.$$
We may assume $u(0)>1$, otherwise
$\int_{\mathbb{B}}\f{e^{4\pi(1-\ba-\ve)u^{2}}}{|x|^{2\ba}}dx\leq \f{\pi e^{4\pi(1-\ba-\ve)}}{1-\ba}$.
Define
$r_1=\inf\{r>0\,|\,u(r)\leq 1\}>0$.
By Lemma 3 and Lemma 4 in \cite{GY12}, we have for any $r\leq r_1$,
$$\int_{\mathbb{B}_r}|\nabla u|^2dx\leq 1-||u||_{\msh(\mathbb{B}_r^c)}^2+C\pi r^2 u(r)^2\leq 1-||u||_{\msh(\mathbb{B}_r^c)}^2+\f{C}{2}r||u||_{\msh(\mathbb{B}_r^c)}^2,$$
where $\mathbb{B}_r^c=\mathbb{B}\setminus \mathbb{B}_r$,   $||\cdot||_{\msh(\B_r^c)}$ is  the norm $||\cdot||_{\msh}$ on $\mathbb{B}_r^c$ and $C$ is a positive constant independent of $u$.
Hence for $r_2\in(0,r_1]$ small enough, independent of $u$, there holds
$$||\nabla u||_{L^2(\mathbb{B}_{r_2})}
\leq 1.$$ Moreover, $u(r_2)$ has an upper bound independent of $u$.
By the singular Moser-Trudinger inequality (\ref{asi}), we have
$$\int_{\mathbb{B}_{r_2}}\f{e^{4\pi(1-\ba)\left[u(r)-u(r_{2})\right]^2}}{|x|^{2\ba}}dx=\int_{\mathbb{B}}\f{e^{4\pi(1-\ba){\left[u(r)-u(r_{2})\right]^+}^2}}{|x|^{2\ba}}dx<+\infty,$$
where $\left[u(r)-u(r_{2})\right]^+=\max\{u(r)-u(r_{2}),0\}$.
Then we have for any $r\leq r_2$,
$$4\pi(1-\ba-\ve)u(r)^2\leq 4\pi(1-\ba)\left[u(r)-u(r_2)\right]^2+8\pi(1-\ba) u(r)u(r_2)-4\pi\ve u^2(r)\leq 4\pi(1-\ba)\left[u(r)-u(r_2)\right]^2+C_{\ve},$$
where $C_{\ve}$ is a positive constant depending only on $\ve$. Hence we get
\bna\int_{\B}\f{e^{4\pi(1-\ba-\ve)u^2}}{|x|^{2\ba}}dx&=&\int_{\B_{r_2}}\f{e^{4\pi(1-\ba-\ve)u^2}}{|x|^{2\ba}}dx+\int_{\B_{r_2}^c}\f{e^{4\pi(1-\ba-\ve)u^2}}{|x|^{2\ba}}dx\\
&\leq & \int_{\B_{r_2}}\f{e^{4\pi(1-\ba)\left[u-u(r_2)\right]^2+C_{\ve}}}{|x|^{2\ba}}dx+\f{\pi(1-r_2^{2-2\ba})}{1-\ba}e^{4\pi(1-\ba-\ve)u(r_2)^2}\\
&< & +\infty.
\ena
One can see that (\ref{sub}) holds true.

We use a method of variation to prove (\ref{le1}). Choose a maximizing sequence $u_j\in \mss$ with $||u_j||_{\msh,\al}\leq 1$ such that
\be
\int_{\B}|x|^{-2\beta}e^{4\pi(1-\beta-\ve)u_j^2}dx\rightarrow  \sup_{u\in\msh,\,||u||_{\msh,\alpha}\leq 1}\int_{\mathbb{B}}|x|^{-2\beta}e^{4\pi(1-\beta-\ve)u^{2}}dx
\ee
as $j\ra \infty$.
Noting $0\leq \al<\l_1(\B)$, we obtain
$$\le(1-\f{\al}{\l_1(\B)}\ri)||u_j||_{\msh}^2\leq 1.$$
This implies that $u_j$ is bounded in $\msh$. There exists some $u_{\ve}\in \mss$ such that up to a subsequence,
\bna
u_j\rightharpoonup u_{\ve} &\mathrm{weakly \,\,in\,\,\,} &\msh,\\
u_j\ra u_{\ve}  & \mathrm{strongly\,\,in} &L^p(\B),\,\,\forall p\geq 1,\\
u_j\ra u_{\ve}  & \mathrm{a.e. \,\,in\,\,\,\,\;\;\;\,}&\B.
\ena
Clearly, $||u_{\ve}||_{\msh,\al}\leq 1$ since $||u_{j}||_{\msh,\al}\leq 1$.

Because  $\langle u_j,u_{\ve}\rangle_{\msh}\ra ||u_{\ve}||^2_{\msh}$ and $||u_j||_2\ra ||u_{\ve}||_2$, we have
\bea
||u_j-u_{\ve}||_{\msh}^2&=&\langle u_j-u_{\ve}, u_j-u_{\ve}\rangle_{\msh}\nonumber\\
&=&||u_j||_{\msh}^2+||u_{\ve}||_{\msh}^2-2\langle u_j, u_{\ve}\rangle_{\msh}\nonumber\\
&=&||u_j||_{\msh}^2-||u_{\ve}||_{\msh}^2+o_j(1)\nonumber\\
&\leq & 1-||u_{\ve}||_{\msh,\al}^2+o_j(1).
\eea
By the H\"{o}lder inequality, we have for  $1<p<1/\beta$,
\bea\label{eet}
\int_{\B}|x|^{-2\ba p}e^{4\pi(1-\ba-\ve)pu_j^2}dx&\leq&\ib|x|^{-2\ba p}e^{4\pi(1-\ba-\ve)p(1+\d)(u_j-u_{\ve})^2+4\pi(1-\ba-\ve)p(1+1/\d)u_{\ve}^2}dx\nonumber\\
&\leq &\le(  \ib |x|^{-2\ba p}e^{4\pi(1-\ba-\ve)pq(1+\d)(u_j-u_{\ve})^2}dx\ri)^{1/q}\nonumber\\
&&\times \le(  \ib |x|^{-2\ba p}e^{4\pi(1-\ba-\ve)pq'(1+1/\d)u_{\ve}^2}dx\ri)^{1/q'}\nonumber\\
&\leq & \le(  \ib |x|^{-2\ba p}e^{4\pi(1-\ba-\ve)pq(1+\d)(u_j-u_{\ve})^2}dx\ri)^{1/q}\nonumber\\
&&\times \le(\ib |x|^{-2\ba p s}dx\ri)^{1/(q's)}\nonumber\\
&&\times\le(\ib e^{4\pi(1-\ba-\ve)pq's'(1+1/\d)u_{\ve}^2}dx\ri)^{1/(q's')},
\eea
where $\d>0$, $1/q+1/q'=1$, $1/s+1/s'=1$.

We choose $p$, $q$, $1+\d$, $s$ sufficiently close to $1$, and $\d_1<\ve$ such that

$$4\pi(1-\ba-\ve)pq(1+\d)(u_j-u_\ve)^2<4\pi(1-\ba p-\d_1)\f{(u_j-u_\ve)^2}{||u_j-u_\ve||^2_\msh},\ \,\,\ba ps<1.$$
Lemma 4 in \cite{YZ15} indicates  that
for any $\g>0$ and any $u\in{\mss}$,  there holds
\be\label{zhu}
\ib e^{\g u^2}dx<+\infty.
\ee
Combining (\ref{sub}), (\ref{eet}) and (\ref{zhu}), we conclude that $|x|^{-2\ba}e^{4\pi(1-\ba-\ve)u_j^2}$ is bounded in $L^p(\B)$ for some $p>1$, which together with $|x|^{-2\ba}e^{4\pi(1-\ba-\ve)u_j^2}\ra |x|^{-2\ba}e^{4\pi(1-\ba-\ve)u_{\ve}^2}$ a.e. as $j\ra \infty$ implies that
\be
\lim_{j\ra \iy}\ib |x|^{-2\ba}e^{4\pi(1-\ba-\ve)u_j^2}dx=\ib |x|^{-2\ba}e^{4\pi(1-\ba-\ve)u_{\ve}^2}dx.
\ee
We claim that $||u_\ve||_{\msh,\al}=1$. Otherwise, we have $||u_\ve||_{\msh,\al}<1$. Hence
\bna\sup_{u\in\msh,\,||u||_{\msh,\alpha}\leq 1}\int_{\mathbb{B}}\f{e^{4\pi(1-\beta-\ve)u^{2}}}{|x|^{2\beta}}dx&=&\ib |x|^{-2\ba}e^{4\pi(1-\ba-\ve)u_{\ve}^2}dx\\
&<&\ib |x|^{-2\ba}e^{4\pi(1-\ba-\ve)u_{\ve}^2/||u_\ve||_{\msh,\al}^2}dx\\
&\leq &\sup_{u\in\msh,\,||u||_{\msh,\alpha}\leq 1}\int_{\mathbb{B}}\f{e^{4\pi(1-\beta-\ve)u^{2}}}{|x|^{2\beta}}dx,\ena
which is a contradiction.

A straightforward calculation shows that $u_{\ve}$ satisfies the following Euler-Lagrange equation
\be\label{Euler-Lagrange}\le\{\begin{array}{lll}-\Delta u_\epsilon-\f{u_\ve}{(1-|x|^2)^2}- \al u_\epsilon=\f{1}{\lambda_\epsilon}|x|^{-2\beta}u_\epsilon
e^{4\pi(1-\beta-\epsilon) u_\epsilon^2}\quad{\rm in}\quad \B,\\[1.2ex]
u_\epsilon\geq 0\quad {\rm in}\quad\mathbb{B},\\[1.2ex]
\lambda_\epsilon=\int_{\B} |x|^{-2\beta}u_\epsilon^2e^{4\pi(1-\beta-\epsilon) u_\epsilon^2}dx.
\end{array}\ri.\ee

Applying standard elliptic estimates to (\ref{Euler-Lagrange}),  we get $u_\ve\in C^{\iy}(\B\bs \{0\})$. Observing $u_\ve\in \mss$ , we have $u_\ve \in C^0(\overline{\B})$.
\end{proof}
\subsection{Blow-up analysis}
We use the method of blow-up analysis to describe the asymptotic behavior of the maximizers $u_\ve$ in Lemma \ref {Lemma 1}.
Note that $||u_\ve||_{\msh,\al}=1$, so $u_\ve$ is bounded in $\msh$. Thus, there exists $u_0\in\mss$ such that up a subsequence,

\bna
u_\ve\rightharpoonup u_{0} &&{\rm weakly}\quad{\rm in}\quad\msh,\\
u_\ve\ra u_{0}  && {\rm strongly}\quad{\rm in } \quad L^p(\B),\quad\forall p\geq 1,\\
u_\ve\ra u_{0}  && {\rm a.e.}\quad{\rm in}\quad\B.
\ena
It is clear that
$$||u_0||_{\msh,\al}\leq \liminf_{\ve\ra 0}||u_\ve||_{\msh,\al}=1.$$

Denote $c_\ve=u_\ve(0)=\max_{\B}u_\ve$. If $c_\ve$ is bounded, then the Lebesgue  dominated convergence theorem yields that
\bna
\ib|x|^{-2\ba}e^{4\pi(1-\ba-\ve)u_0^2}dx=\lim_{\ve\ra 0}\ib |x|^{-2\ba}e^{4\pi(1-\ba-\ve)u_\ve^2}dx=\sup_{u\in\msh,\,||u||_{\msh,\alpha}\leq 1}\int_{\mathbb{B}}\f{e^{4\pi(1-\beta)u^{2}}}{|x|^{2\beta}}dx.
\ena
Thus,  $u_0$
 is the desired maximizer.  Now, we assume that
 \be\label{cwq}
 c_\ve\ra +\infty \quad{\rm as}\quad  \ve\ra 0.
 \ee
The simple inequality $e^t\leq 1+te^t$ implies
 $$\ib|x|^{-2\ba}e^{4\pi(1-\ba-\ve)u_\ve^2}dx\leq \ib |x|^{-2\ba}dx+4\pi\l_\ve.$$
 This together with (\ref{le1}) yields
 \be\label{lbd}
 \liminf_{\ve\ra 0}\l_\ve>0.
 \ee
 Now, we claim that $u_0\equiv 0$. Otherwise, $||u_0||_{\msh,\al}>0$.  Calculate
 \bea
||u_\ve-u_{0}||_{\msh}^2&=&\langle u_\ve-u_{0}, u_\ve-u_{0}\rangle_{\msh}\nonumber\\
&=&||u_\ve||_{\msh}^2+||u_{0}||_{\msh}^2-2\langle u_\ve, u_{0}\rangle_{\msh}\nonumber\\
&=&||u_\ve||_{\msh,\al}^2-||u_{0}||_{\msh,\al}^2+o_\ve(1)\nonumber\\
&\leq & 1-||u_{0}||_{\msh,\al}^2+o_\ve(1).
\eea
 By  the similar estimates as in (\ref{eet}), we get $|x|^{-2\ba}e^{4\pi(1-\ba-\ve)u_\ve^2}$ is bounded in $L^p(\B)$ for some $p>1$.
  In view of (\ref{lbd}), applying elliptic estimates to (\ref{Euler-Lagrange}), we have  that $u_\ve$ is bounded in $C^0_{\rm loc}(\B)$. This contradicts (\ref{cwq}).

  Set \be
  r_\ve=\sqrt{\l_\ve}c_\ve^{-1}e^{-2\pi(1-\ba-\ve)c_\ve^2}.
  \ee
Note that $u_\ve\ra  0$ in $L^q(\B)$ for any $q\geq 1$.  Then,  by the H\"{o}lder inequality and (\ref{sub}), we have
\bea\label{rve}
\l_\ve=\ib|x|^{-2\ba}u_\ve^2e^{4\pi(1-\ba-\ve )u_\ve^2}dx\leq e^{4\pi\d c_\ve^2}\ib|x|^{-2\ba}u_\ve^2e^{4\pi(1-\ba-\ve-\d )u_\ve^2}dx \leq Ce^{4\pi\d c_\ve^2}
\eea
for $0<\d <1-\ba$, where the constant $C$ is independent of $u_\ve$. This leads to
\be\label{rgz}
r_\ve^2\leq Cc_\ve^{-2}e^{-4\pi(1-\ba-\ve-\d)c_\ve^2}\ra 0,\quad {\rm as}\quad \ve\ra 0.\ee
Let $\O_\ve=\{x\in \mathbb{R}^2:r_\ve^{1/(1-\ba)}x\in \B\}$.  We define  two blow-up sequences:
\be\label{bus}
\psi_\ve(x)=c_\ve^{-1}u_\ve(r_\ve^{1/(1-\ba)}x),\quad \varphi_\ve(x)=c_\ve(u_\ve(r_\ve^{1/(1-\ba)}x)-c_\ve).\ee
Then $\psi_\ve$ satisfies the following equation:
\be\label{pse}
-\D\psi_\ve= \f{\psi_\ve r_\ve^{2/(1-\ba)}}{(1-r_\ve^{2/(1-\ba)}|x|^2)^2}+\al r_\ve^{2/(1-\ba)}\psi_\ve
+c_\ve^{-2}|x|^{-2\ba}\psi_\ve e^{4\pi(1-\ba-\ve)(1+\psi_\ve)\varphi_\ve}\quad{\rm in }\quad \O_\ve.
\ee
By (\ref{rgz}), we have $r_\ve\ra 0$, hence $\O_\ve\ra \mathbb{R}^2$.   Using elliptic estimates to (\ref{pse}), we conclude that $\psi_\ve\ra\psi$ in
$C^1_{\rm loc}(\mathbb{R}^2\bs\{0\})\cap C^0_{\rm loc}(\mathbb{R}^2)$, where $\psi$ is a distributional harmonic function.
Clearly,  $\psi(0)=\lim_{\ve\ra 0}\psi_\ve(0)=1$. The Liouville theorem implies that $\psi\equiv 1$ on $\mathbb{R}^2$.  Hence we have
\be\label{pgo}
\psi_\ve\ra 1 \quad {\rm in}\quad C^1_{\rm loc}(\mathbb{R}^2\bs\{0\})\cap C^0_{\rm loc}(\mathbb{R}^2).\ee

A straightforward computation shows that
\be\label{phe}
-\D\varphi_\ve= \f{\psi_\ve c_\ve^2r_\ve^{2/(1-\ba)}}{(1-r_\ve^{2/(1-\ba)}|x|^2)^2}+\al c_\ve^2 r_\ve^{2/(1-\ba)}\psi_\ve
+|x|^{-2\ba}\psi_\ve e^{4\pi(1-\ba-\ve)(1+\psi_\ve)\varphi_\ve}\quad{\rm in }\quad \O_\ve.
\ee
In view of  (\ref{pgo}) and $\varphi_\ve(x)\leq \varphi_\ve(0)=0$ for all $x\in\O_\ve$, we have by the elliptic estimates that
\be\label{vgh}
\varphi_\ve\ra \varphi_0 \quad {\rm in}\quad C^1_{\rm loc}(\mathbb{R}^2\bs\{0\})\cap C^0_{\rm loc}(\mathbb{R}^2),\ee
where $\varphi_0$ is a solution to
\be\label{vhr}
-\D\varphi_0=|x|^{-2\ba}e^{8\pi(1-\ba)\varphi_0}\quad {\rm in}\quad \mathbb{R}^2\bs\{0\}.
\ee
Then for any fixed $R>0$, we have
\bna
\int_{\B_{R}(0)}|x|^{-2\ba}e^{8\pi(1-\ba)\varphi_0}dx&\leq &\limsup_{\ve\ra 0}\int_{\B_{R}(0)}|x|^{-2\ba}e^{4\pi(1-\ba-\ve)(1+\psi_\ve)\varphi_\ve}dx\\
&\leq &\limsup_{\ve\ra 0}\l_\ve^{-1}\int_{\B_Rr_\ve^{1/(1-\ba)}(0)}|y|^{-2\ba}c_\ve^2e^{4\pi(1-\ba-\ve)u_\ve^2(y)}dy\\
&\leq &\limsup_{\ve\ra 0}\l_\ve^{-1}\int_{\B_Rr_\ve^{1/(1-\ba)}(0)}|y|^{-2\ba}u_\ve^2(y)e^{4\pi(1-\ba-\ve)u_\ve^2(y)}dy\\
&\leq& 1.
\ena
Therefore,
$$\int_{\mathbb{R}^2}|x|^{-2\ba}e^{8\pi(1-\ba)\varphi_0}dx\leq 1.$$
The classification result of Chen and Li (\cite{CL95}, Theorem 3.1) leads to
\be\label{vor}
\varphi_0(x)=-\f{1}{4\pi(1-\ba)}\log\le(1+\f{\pi}{1-\ba}|x|^{2(1-\ba)}\ri).
\ee
It follows that
\be\label{int}
\int_{\mathbb{R}^2}|x|^{-2\ba}e^{8\pi(1-\ba)\varphi_0}dx= 1.\ee

We now analyze the behavior of $u_\ve$ away from the zero.
Let $u_{\ve,\tau}=\min\{u_\ve, \tau c_\ve\}$ for any $\tau$, $0<\tau<1$. The we have the following lemma.
\begin{lemma}\label{min}
For any $\tau$, $0<\tau<1$, there holds $\lim_{\ve\ra 0}||u_{\ve,\tau}||^2_{\msh,\al}=\tau.$
\end{lemma}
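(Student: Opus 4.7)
The plan is to test the Euler--Lagrange equation \eqref{Euler-Lagrange} against the truncation $u_{\ve,\tau}$ itself. Since $u_{\ve,\tau}\in\msh$ and $\na u_{\ve,\tau}$ equals $\na u_\ve$ on $\{u_\ve\leq\tau c_\ve\}$ and vanishes on its complement, one has $\int_\B\na u_\ve\cdot\na u_{\ve,\tau}\,dx=\int_\B|\na u_{\ve,\tau}|^2\,dx$. Completing squares by adding and subtracting the $u_{\ve,\tau}^2$ contributions in the Hardy and mass terms yields the clean identity
\[
\|u_{\ve,\tau}\|_{\msh,\al}^2=\f{1}{\l_\ve}\ib|x|^{-2\ba}u_\ve u_{\ve,\tau}e^{4\pi(1-\ba-\ve)u_\ve^2}dx+\ib\f{(u_\ve-u_{\ve,\tau})u_{\ve,\tau}}{(1-|x|^2)^2}dx+\al\ib(u_\ve-u_{\ve,\tau})u_{\ve,\tau}\,dx,
\]
so the whole task reduces to controlling these three pieces as $\ve\ra 0$.

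The two Hardy/mass error terms will be shown to be $o_\ve(1)$. Since $u_\ve\in\mss$ is radially non-increasing and $u_\ve\ra 0$ strongly in $L^2(\B)$ while $c_\ve\ra+\iy$, Chebyshev's inequality forces $|\{u_\ve>\tau c_\ve\}|\ra 0$, so this super-level set is a disc $\B_{\rho_\ve}(0)$ with $\rho_\ve\ra 0$. Both error integrands are supported on $\B_{\rho_\ve}(0)$ and pointwise bounded by $u_\ve^2$, and the Hardy weight $(1-|x|^2)^{-2}$ is uniformly bounded on $\B_{\rho_\ve}(0)$; the $L^2$-convergence $u_\ve\ra 0$ then closes the estimate.

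The core of the proof is to evaluate the exponential term. I split the integral at the scale $Rr_\ve^{1/(1-\ba)}$ and take $\ve\ra 0$ before $R\ra+\iy$. The rescaling $y=r_\ve^{1/(1-\ba)}x$, together with the algebraic identities $r_\ve^2 c_\ve^2 e^{4\pi(1-\ba-\ve)c_\ve^2}=\l_\ve$ and $c_\ve^2\psi_\ve^2=c_\ve^2+(1+\psi_\ve)\varphi_\ve$, recasts the inner piece as
\[
\int_{\B_R(0)}|x|^{-2\ba}\psi_\ve\min\{\psi_\ve,\tau\}e^{4\pi(1-\ba-\ve)(1+\psi_\ve)\varphi_\ve}dx,
\]
which by \eqref{pgo}, \eqref{vgh} and dominated convergence (the factor $|x|^{-2\ba}$ is integrable near $0$ because $\ba<1$) tends to $\tau\int_{\B_R(0)}|x|^{-2\ba}e^{8\pi(1-\ba)\varphi_0}dx$; in view of \eqref{int}, the limit $R\ra+\iy$ gives exactly $\tau$. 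For the outer piece, the bound $u_{\ve,\tau}\leq u_\ve$ combined with $\int|x|^{-2\ba}u_\ve^2 e^{\cdots}dx=\l_\ve$ gives
\[
\f{1}{\l_\ve}\int_{\B\bs\B_{Rr_\ve^{1/(1-\ba)}}(0)}|y|^{-2\ba}u_\ve u_{\ve,\tau}e^{4\pi(1-\ba-\ve)u_\ve^2}dy\leq 1-\f{1}{\l_\ve}\int_{\B_{Rr_\ve^{1/(1-\ba)}}(0)}|y|^{-2\ba}u_\ve^2 e^{4\pi(1-\ba-\ve)u_\ve^2}dy,
\]
and the same rescaling drives the right-hand side to $1-\int_{\B_R(0)}|x|^{-2\ba}e^{8\pi(1-\ba)\varphi_0}dx$, which vanishes as $R\ra+\iy$ by \eqref{int}.

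The main obstacle is the outer region, where no direct pointwise decay of $u_\ve$ away from the origin is available at this stage. The trick is to exploit the sharpness of \eqref{int}: the blow-up profile $\varphi_0$ already accounts for the full unit mass $\l_\ve^{-1}\int|x|^{-2\ba}u_\ve^2 e^{4\pi(1-\ba-\ve)u_\ve^2}dx=1$, so the complement of $\B_{Rr_\ve^{1/(1-\ba)}}(0)$ must give a vanishing contribution in the iterated limit, which is precisely what is needed to conclude $\|u_{\ve,\tau}\|_{\msh,\al}^2\ra\tau$.
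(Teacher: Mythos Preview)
Your proof is correct and begins exactly as the paper's does: test the Euler--Lagrange equation against $u_{\ve,\tau}$, use $\int_\B\na u_\ve\cdot\na u_{\ve,\tau}\,dx=\int_\B|\na u_{\ve,\tau}|^2\,dx$, and rearrange to the identity with the exponential term plus the two nonnegative Hardy/mass remainders. From here, however, the two arguments diverge.

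The paper does not try to show that the remainder terms vanish. It simply drops them (they are nonnegative) to obtain $\liminf_{\ve\to 0}\|u_{\ve,\tau}\|_{\msh,\al}^2\geq\tau$ from the inner-ball rescaling, and then runs the \emph{same} computation with $(u_\ve-\tau c_\ve)^+$ in place of $u_{\ve,\tau}$ to get $\liminf_{\ve\to 0}\|(u_\ve-\tau c_\ve)^+\|_{\msh,\al}^2\geq 1-\tau$. Since $|\na u_\ve|^2=|\na u_{\ve,\tau}|^2+|\na(u_\ve-\tau c_\ve)^+|^2$ and $u_\ve\to 0$ in $L^p$, the two norms add up to $1+o_\ve(1)$, which pins both $\liminf$'s down to their respective limits. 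You instead show directly that the Hardy/mass remainders are $o_\ve(1)$ (using $\rho_\ve\to 0$, the local boundedness of the Hardy weight near the origin, and $\|u_\ve\|_2\to 0$), and then squeeze the exponential term between the inner-ball contribution $\tau\int_{\B_R}|x|^{-2\ba}e^{8\pi(1-\ba)\varphi_0}dx$ and that same quantity plus the outer remainder $1-\int_{\B_R}|x|^{-2\ba}e^{8\pi(1-\ba)\varphi_0}dx$, invoking the sharpness of \eqref{int} to close the gap as $R\to\infty$. Your route avoids introducing the companion function $(u_\ve-\tau c_\ve)^+$ and is slightly more self-contained; the paper's pinching argument, on the other hand, never needs to control the remainder terms and, as a by-product, simultaneously yields $\lim_{\ve\to 0}\|(u_\ve-\tau c_\ve)^+\|_{\msh,\al}^2=1-\tau$.
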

\begin{proof}
By the equation (\ref{Euler-Lagrange}), the integration by parts yields
\bna\ib|\na u_{\ve,\tau}|^2dx&=&\ib\na u_{\ve,\tau}\na u_\ve dx\\
&=&\ib\le(\f{u_{\ve,\tau}u_\ve}{(1-|x|^2)^2}
+\al u_{\ve,\tau}u_\ve+\l_\ve^{-1}|x|^{-2\ba}u_{\ve,\tau}u_\ve e^{4\pi(1-\ba-\ve)u_\ve^2}\ri)dx.
\ena
Thus,
\bna
||u_{\ve,\tau}||^2_{\msh,\al}&=&\ib \le(\na u_{\ve,\tau}\na u_\ve-\f{u_{\ve,\tau}^2}{(1-|x|^2)^2}-\al u_{\ve,\tau}^2\ri)dx\\
&=& \ib\le(   \f{u_{\ve,\tau}(u_\ve-u_{\ve,\tau})}{(1-|x|^2)^2} +\al u_{\ve,\tau}(u_\ve-u_{\ve,\tau})+\l_\ve^{-1}|x|^{-2\ba}u_{\ve,\tau}u_\ve e^{4\pi(1-\ba-\ve)u_\ve^2}\ri)dx\\
&\geq &\ib\le(\l_\ve^{-1}|x|^{-2\ba}u_{\ve,\tau}u_\ve e^{4\pi(1-\ba-\ve)u_\ve^2}\ri)dx\\
&\geq & \tau\int_{\mathbb{B}_{Rr_\ve^{1/(1-\ba)}}}\le(\l_\ve^{-1}|x|^{-2\ba}c_\ve u_\ve e^{4\pi(1-\ba-\ve)u_\ve^2}\ri)dx\\
&\geq& \tau\int_{\mathbb{B}_R(0)}(1+o_\ve(1))|x|^{-2\ba}e^{8\pi(1-\ba)\varphi_0}dx
\ena
for any fixed $R>0$. Letting $\ve\ra 0$ first and then $R\ra \iy$, we get
$$\liminf_{\ve\ra 0}||u_{\ve,\tau}||^2_{\msh,\al}\geq \tau.$$
Noting $|\na(u_\ve-\tau  c_\ve)^+|^2=\na(u_\ve-\tau c_\ve)^+\na u_\ve$ on $\mathbb{B}$ and $(u_\ve-\tau c_\ve)^+=(1+o_\ve(1))(1-\tau)c_\ve$ on $\B_{Rr_\ve^{1/(1-\ba)}}(0)$, we have by the similar calculation that
$$\liminf_{\ve\ra 0}||(u_\ve-\tau c_\ve)^+||^2_{\msh,\al}\geq 1-\tau.$$
Since $|\na u_\ve|^2=|\na u_{\ve,\tau}|^2+|\na (u_\ve-\tau c_\ve)^+|^2$ and $u_\ve \ra 0$ in $L^p(\B)$ for any fixed $p\geq 1$, we have
$$\lim_{\ve\ra 0}\le(||u_{\ve,\tau}||^2_{\msh,\al}+||(u_\ve-\tau c_\ve)^+||^2_{\msh,\al}\ri)=\lim_{
\ve\ra 0}||u_\ve||^2_{\msh,\al}=1.$$
Hence,
$$\lim_{\ve\ra0}||u_{\ve,\tau}||^2_{\msh,\al}=\tau,\quad \lim_{\ve\ra 0}||(u_\ve-\tau c_\ve)^+||^2_{\msh,
\al}=1-\tau.$$
\end{proof}
\begin{lemma}\label{lce}
There holds
$$\limsup_{\ve\ra 0}\ib|x|^{-2\ba}e^{4\pi(1-\ba-\ve)u_\ve^2}\leq \f{\pi}{1-\ba}+\limsup_{\ve\ra 0}\f{\l_\ve}{c^2_\ve}.
$$
\end{lemma}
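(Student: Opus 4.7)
My plan is to split the integral into a blow-up ball $A_R:=\B_{Rr_\ve^{1/(1-\ba)}}(0)$ around the origin and its complement $\B\setminus A_R$, then further decompose the complement according to whether $u_\ve>\tau c_\ve$ or $u_\ve\leq \tau c_\ve$ for a fixed $\tau\in(0,1)$. The ball $A_R$ will produce the term $\l_\ve/c_\ve^2$, the low-value piece of the complement will yield $\pi/(1-\ba)$, and the intermediate piece (where $u_\ve$ is still a nontrivial fraction of $c_\ve$ but $x\notin A_R$) will be shown to vanish after taking $R\to\infty$.

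On $A_R$, the change of variable $y=r_\ve^{1/(1-\ba)}z$, together with the identity $u_\ve^2(r_\ve^{1/(1-\ba)}z)=c_\ve^2+(1+\psi_\ve(z))\varphi_\ve(z)$ and the definition $r_\ve^2=\l_\ve c_\ve^{-2}e^{-4\pi(1-\ba-\ve)c_\ve^2}$, transforms the near integral into
\[
\int_{A_R}|x|^{-2\ba}e^{4\pi(1-\ba-\ve)u_\ve^2}dx=\f{\l_\ve}{c_\ve^2}\int_{\B_R}|z|^{-2\ba}e^{4\pi(1-\ba-\ve)(1+\psi_\ve)\varphi_\ve}dz.
\]
By (\ref{pgo}), (\ref{vgh}), the fact that $\varphi_\ve\leq 0$, and dominated convergence, the right-hand integral converges to $P(R):=\int_{\B_R}|z|^{-2\ba}e^{8\pi(1-\ba)\varphi_0}dz$, which approaches $1$ as $R\to\infty$ by (\ref{int}).

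On $(\B\setminus A_R)\cap\{u_\ve>\tau c_\ve\}$, I use the elementary bound $e^{4\pi(1-\ba-\ve)u_\ve^2}\leq \tau^{-2}c_\ve^{-2}u_\ve^2 e^{4\pi(1-\ba-\ve)u_\ve^2}$; the analogous change of variable identifies $\int_{A_R}|x|^{-2\ba}u_\ve^2 e^{4\pi(1-\ba-\ve)u_\ve^2}dx$ with $\l_\ve\int_{\B_R}|z|^{-2\ba}\psi_\ve^2 e^{4\pi(1-\ba-\ve)(1+\psi_\ve)\varphi_\ve}dz\to \l_\ve P(R)$, so this contribution is at most $(\l_\ve/(\tau^2 c_\ve^2))(1-P(R)+o_\ve(1))$. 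On $\{u_\ve\leq \tau c_\ve\}$ we have $u_\ve=u_{\ve,\tau}$, and Lemma~\ref{min} combined with $||u_{\ve,\tau}||_2\leq ||u_\ve||_2\to 0$ gives $||u_{\ve,\tau}||_\msh^2\to\tau<1$. Choosing $p>1$ so close to $1$ that $p(\ba+(1-\ba)\tau)<1$ and applying (\ref{sub}) (with $\ba$ replaced by $\ba p$) to $u_{\ve,\tau}/||u_{\ve,\tau}||_\msh$ shows that $|x|^{-2\ba}e^{4\pi(1-\ba-\ve)u_{\ve,\tau}^2}$ is bounded in $L^p(\B)$, hence uniformly integrable; since $u_{\ve,\tau}\to 0$ a.e., Vitali's theorem yields
\[
\int_{\B}|x|^{-2\ba}e^{4\pi(1-\ba-\ve)u_{\ve,\tau}^2}dx\to \f{\pi}{1-\ba}.
\]

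Summing the three pieces and passing to $\limsup_{\ve\to 0}$ yields an upper bound of the form $B(R)\limsup_{\ve\to 0}\l_\ve/c_\ve^2+\pi/(1-\ba)$, where $B(R)=P(R)+(1-P(R))/\tau^2\to 1$ as $R\to\infty$; letting $R\to\infty$ finishes the proof (with the inequality being trivial when $\limsup_\ve \l_\ve/c_\ve^2=+\infty$). The main difficulty is handling the intermediate annular region: one must show that the mass of $|x|^{-2\ba}u_\ve^2 e^{4\pi(1-\ba-\ve)u_\ve^2}$ concentrates inside $A_R$ as $R\to\infty$, which is precisely the content of the total-mass identity (\ref{int}).
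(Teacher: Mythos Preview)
Your argument is correct. The paper's proof follows the same two basic ideas---truncation at level $\tau c_\ve$ together with Lemma~\ref{min} to handle the low-value region, and the pointwise bound $e^{4\pi(1-\ba-\ve)u_\ve^2}\le \tau^{-2}c_\ve^{-2}u_\ve^2e^{4\pi(1-\ba-\ve)u_\ve^2}$ on the high-value region---but it never introduces the blow-up ball $A_R$. Instead, on $\{u_\ve>\tau c_\ve\}$ the paper bounds the singular integral by the \emph{full} $\l_\ve/(\tau^2c_\ve^2)$, obtaining
\[
\ib|x|^{-2\ba}e^{4\pi(1-\ba-\ve)u_\ve^2}dx\ \le\ \ib|x|^{-2\ba}e^{4\pi(1-\ba-\ve)u_{\ve,\tau}^2}dx+\f{\l_\ve}{\tau^2c_\ve^2},
\]
and then sends $\ve\to 0$ followed by $\tau\to 1$. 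Your refinement---subtracting off the $A_R$-contribution from $\l_\ve$, so that the intermediate annular piece carries the factor $(1-P(R))/\tau^2$, and then sending $R\to\infty$---achieves the same final bound while keeping $\tau$ fixed. In effect you trade the limit $\tau\to 1$ for the limit $R\to\infty$, invoking (\ref{int}) to absorb the $\tau$-loss. This is perfectly valid but slightly longer; the paper's version avoids any explicit use of the blow-up scaling in this lemma.
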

\begin{proof}
For any $\tau$, $0<\tau<1$, we have
\bna\ib|x|^{-2\ba}e^{4\pi(1-\ba-\ve)u_\ve^2}dx&=&\int_{\{u_\ve\leq \tau c_\ve\}}|x|^{-2\ba}e^{4\pi(1-\ba-\ve)u_\ve^2}dx
+\int_{\{u_\ve>\tau c_\ve\}}|x|^{-2\ba}e^{4\pi(1-\ba-\ve)u_\ve^2}dx\\
&\leq &\ib|x|^{-2\ba}e^{4\pi(1-\ba-\ve)u_{\ve,\tau}^2}dx+\f{\l_\ve}{\tau^2c_\ve^2}.
\ena
By Lemma \ref{min}, we have that $|x|^{-2\ba}e^{4\pi(1-\ba-\ve)u_{\ve,\tau}^2}$ is bounded in $L^p(\B)$ for some
$p\geq 1$. Therefore,
$$\lim_{\ve\ra 0} \ib |x|^{-2\ba}e^{4\pi(1-\ba-\ve)u_{\ve,\tau}^2}dx=\ib |x|^{-2\ba}dx=\f{\pi}{1-\ba}.$$
Combining the above estimates and letting $\ve\ra 0$ first, then $\tau\ra 1$, we finish the proof.
\end{proof}
Similar to  \cite{GY12} and \cite{YZ15}, we prove the following:
\begin{lemma}\label{dlf}
For any $\phi\in C^{\iy}(\overline{\B})$, there holds
$$\lim_{\ve\ra 0}\ib \l_\ve^{-1}|x|^{-2\ba}c_\ve u_\ve e^{4\pi(1-\ba-\ve)u_\ve^2}\phi dx=\phi(0).$$
\end{lemma}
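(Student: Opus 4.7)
The strategy is to split $\int_{\mathbb{B}}$ into a near part over the blow-up ball $B_{Rr_\ve^{1/(1-\ba)}}(0)$ and a far part over its complement. I expect the near part to capture $\phi(0)$ in the iterated limit $\ve\to 0$ followed by $R\to\infty$, and the far part to be negligible by a concentration argument resting on Lemma \ref{min} and the mass identity from (\ref{int}).

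\emph{Near part.} Perform the change of variables $x=r_\ve^{1/(1-\ba)}y$. Since $u_\ve(r_\ve^{1/(1-\ba)}y)=c_\ve+c_\ve^{-1}\varphi_\ve(y)$, one has the algebraic identity $c_\ve^2\psi_\ve^2(y)-c_\ve^2=(1+\psi_\ve(y))\varphi_\ve(y)$, so the defining cancellation $r_\ve^2 c_\ve^2 e^{4\pi(1-\ba-\ve)c_\ve^2}=\l_\ve$ collapses the integrand to
\[
|y|^{-2\ba}\,\psi_\ve(y)\,e^{4\pi(1-\ba-\ve)(1+\psi_\ve(y))\varphi_\ve(y)}\,\phi\bigl(r_\ve^{1/(1-\ba)}y\bigr).
\]
Dominated convergence, combined with (\ref{pgo}), (\ref{vgh}) and continuity of $\phi$, yields $\phi(0)\int_{B_R(0)}|y|^{-2\ba}e^{8\pi(1-\ba)\varphi_0(y)}\,dy$, which tends to $\phi(0)$ as $R\to\infty$ by (\ref{int}).

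\emph{Far part.} For fixed $\tau\in(0,1)$, split $\mathbb{B}\setminus B_{Rr_\ve^{1/(1-\ba)}}(0)$ as $\{u_\ve\geq\tau c_\ve\}\cup\{u_\ve<\tau c_\ve\}$. On $\{u_\ve\geq\tau c_\ve\}$, the pointwise bound $c_\ve u_\ve\leq\tau^{-1}u_\ve^2$ reduces the integrand to a piece of the unit-mass measure $\l_\ve^{-1}|x|^{-2\ba}u_\ve^2 e^{4\pi(1-\ba-\ve)u_\ve^2}dx$; rerunning the near computation with $\psi_\ve^2$ in place of $\psi_\ve$ shows that this mass concentrates at $0$, so the contribution from the complement tends to zero. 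On $\{u_\ve<\tau c_\ve\}$ we have $u_\ve=u_{\ve,\tau}$, and Lemma \ref{min} gives $\|u_{\ve,\tau}\|_{\msh,\al}^2\to\tau<1$; hence the singular Moser-Trudinger inequality (\ref{asi}) furnishes, for some $p>1$, a uniform $L^p$-bound on $|x|^{-2\ba}e^{4\pi(1-\ba-\ve)u_{\ve,\tau}^2}$. A weighted H\"older split, coupled with the strong convergence $u_\ve\to 0$ in $L^{p'}(\mathbb{B},|x|^{-2\ba})$, then kills this subcritical contribution.

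\emph{Main obstacle.} The delicate point is the $\{u_\ve<\tau c_\ve\}$ branch. The crude bound $c_\ve u_\ve\leq\tau c_\ve^2$ leaves the potentially unbounded factor $c_\ve^2/\l_\ve$, which need not be small. One must therefore trade $L^\infty$-control ($u_\ve\leq c_\ve$) against $L^q$-smallness of $u_\ve$, tuning the H\"older exponent $p$ close enough to $1$ so that $(1-\ba-\ve)p\,\|u_{\ve,\tau}\|_{\msh,\al}^2<1-\ba$ places the exponential factor in the subcritical Moser-Trudinger regime while $\|u_\ve\|_{L^{p'}(\mathbb{B},|x|^{-2\ba})}\to 0$ provides the decay. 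Carrying out this quantitative balancing is the essential step; once completed, $I_2\to 0$ and the lemma follows.
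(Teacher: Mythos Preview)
Your three-region decomposition and the treatment of the near ball $\B_{Rr_\ve^{1/(1-\ba)}}$ and of $\{u_\ve\geq\tau c_\ve\}\setminus\B_{Rr_\ve^{1/(1-\ba)}}$ are exactly the paper's. The gap lies in the low-level region $\{u_\ve<\tau c_\ve\}\setminus\B_{Rr_\ve^{1/(1-\ba)}}$. After your H\"older split the quantity to control is, up to $\sup_{\B}|\phi|$,
\[
\frac{c_\ve}{\l_\ve}\,\|u_\ve\|_{L^{p'}(\B)}\,\bigl\||x|^{-2\ba}e^{4\pi(1-\ba-\ve)u_{\ve,\tau}^2}\bigr\|_{L^p(\B)}.
\]
Tuning $p$ close to $1$ does make the last factor uniformly bounded via Lemma~\ref{min} and (\ref{sub}), but it does nothing for the prefactor. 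Your hope is that $\|u_\ve\|_{L^{p'}}\to 0$ decays fast enough to absorb the growing $c_\ve$; however, a posteriori one has $c_\ve u_\ve\to G$ in every $L^q$ (Lemma~\ref{con}), so in fact $c_\ve\|u_\ve\|_{L^{p'}}\to\|G\|_{L^{p'}}>0$. Hence the product tends to zero only if $\l_\ve\to\iy$, and no choice of H\"older exponents can manufacture that information; the ``quantitative balancing'' you describe adjusts the subcritical exponent but leaves the prefactor untouched.

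The missing ingredient is Lemma~\ref{lce}. Since its left-hand side is the supremum of the subcritical functional and hence strictly exceeds $\pi/(1-\ba)=\ib|x|^{-2\ba}dx$, that lemma forces $\limsup_{\ve\to 0}\l_\ve/c_\ve^2>0$, and therefore $c_\ve/\l_\ve\to 0$. The paper invokes precisely this fact, after which the estimate for $I_2$ is simply $(c_\ve/\l_\ve)\cdot\|u_\ve\|_{L^{p'}}\cdot O(1)\to 0$, with no delicate balancing required.
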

\begin{proof}
Divide $\B$ into three parts:
$$\B=\le(\{u_\ve>\tau c_\ve\}\bs \B _{Rr_\ve^{1/(1-\ba)}}(0)\ri)\cup \le(\{u_\ve\leq \tau c_\ve\}\bs \B _{Rr_\ve^{1/(1-\ba)}}(0)\ri)\cup \B _{Rr_\ve^{1/(1-\ba)}}(0),$$
for  some $0<\tau<1.$
Denote the integrals on the above three domains by $I_1$, $I_2$ and $I_3$ respectively. We estimate them one by one.
In view of (\ref{pgo}), (\ref{vgh}) and (\ref{int}),  we have
\bna
I_1&\leq & \sup_{\B}|\phi|\int_{\{u_\ve>\tau c_\ve\}\bs \B _{Rr_\ve^{1/(1-\ba)}}(0)} \l_\ve^{-1}|x|^{-2\ba}c_\ve u_\ve e^{4\pi(1-\ba-\ve)u_\ve^2} dx\\
&\leq & \f{1}{\tau}\sup_{\B}|\phi|\le(1-\int_{ \B _{Rr_\ve^{1/(1-\ba)}}(0)}\l_\ve^{-1}|x|^{-2\ba} u^2_\ve e^{4\pi(1-\ba-\ve)u_\ve^2} dx\ri)\\
&\leq & \f{1}{\tau}\sup_{\B}|\phi|\le(1-\int_{ \B _{R}(0)}|x|^{-2\ba}  e^{8\pi(1-\ba)\phi_0} dx+o_\ve(R)\ri),
\ena
where $o_\ve(R)\ra 0$ as $\ve \ra 0$ for any fixed $R>0$. Thus, $I_1\ra 0$ by letting  $\ve \ra 0$ first and then $R\ra +\iy.$
Noting that $|x|^{-2\ba}e^{4\pi(1-\ba-\ve)u_{\ve,\tau}^2}$ is bounded in $L^p(\B)$ for some
$p\geq 1$, we have
\bna
I_2&=&\int_{\{u_\ve\leq \tau c_\ve\}\bs \B _{Rr_\ve^{1/(1-\ba)}}(0)}\l_\ve^{-1}|x|^{-2\ba}c_\ve u_\ve e^{4\pi(1-\ba-\ve)u_\ve^2}\phi dx\\
&\leq &\sup_{\B}|\phi|\f{c_\ve}{\l_\ve}\ib|x|^{-2\ba} u_\ve e^{4\pi(1-\ba-\ve)u_{\ve,\tau}^2}\phi dx\\
&\leq &\sup_{\B}|\phi|\f{c_\ve}{\l_\ve}||u_\ve||_{L^q(\B)}|||x|^{-2\ba}  e^{4\pi(1-\ba-\ve)u_{\ve,\tau}^2}||_{L^p(\B)},
\ena
where $1/q+1/p=1$. Lemma \ref{lce} yields $\l_\ve/c_\ve\ra +\iy$, hence $c_\ve/\l_\ve\ra 0$. We obtain $I_2\ra 0$ as $\ve \ra 0$.
Next,
\bna
I_3&=&\int_{\B_{Rr_\ve^{1/(1-\ba)}}(0)}\l_\ve^{-1}|x|^{-2\ba}c_\ve u_\ve e^{4\pi(1-\ba-\ve)u_\ve^2}\phi dx\\
&=&\phi(0)(1+o_\ve(1))\le(\int_{\B_R(0)}|x|^{-2\ba}e^{8\pi(1-\ba)\phi_0}dx+o_\ve(R)\ri).
\ena
Letting $\ve\ra 0$ first and then $R\ra +\iy$, we have $I_3\ra \phi(0)$. Finally, we get
$$\lim_{\ve\ra 0}\left(I_1+I_2+I_3\right)=\phi(0).$$\end{proof}
Define the operator
$$\mathscr{L}_\al=-\D-\f{1}{(1-|x|^2)^2}-\al.$$
We have the following lemma.
\begin{lemma}\label{con} $c_\ve u_\ve\rightharpoonup G$ weakly in $W^{1,p}_{\rm loc}(\B)$ for any $1<p<2$,   strongly in $L^q(\B)$ for any $q\geq 1$
and in $C^0(\overline{\B_r^c})$ for any $0<r<1$, where $G$ is a Green function satisfying
\be\label{gre}
\mathscr{L}_\al(G)=\d_0,
\ee
where $\d_0$ is the Dirac  measure centered at $0$.
\end{lemma}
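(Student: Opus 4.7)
The plan is to rewrite the Euler--Lagrange equation (\ref{Euler-Lagrange}) after multiplication by $c_\ve$ as an elliptic equation
\[
\mathscr{L}_\al(c_\ve u_\ve)=\f{c_\ve}{\l_\ve}|x|^{-2\ba}u_\ve e^{4\pi(1-\ba-\ve)u_\ve^2}=:f_\ve\quad\text{in}\quad\B,
\]
whose right-hand side concentrates as a Dirac mass at the origin. Applying Lemma \ref{dlf} with $\phi\equiv 1$ gives $\ib f_\ve\,dx\to 1$, so $\{f_\ve\}$ is bounded in $L^1(\B)$; applied with general $\phi\in C^\iy(\overline{\B})$ it gives $f_\ve\rightharpoonup\d_0$ distributionally. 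This puts the problem in the classical framework of elliptic equations with $L^1$ (indeed, measure) data, and the strategy breaks into four steps: (i) uniform $W^{1,p}_{\rm loc}(\B)$ bounds for $1<p<2$; (ii) extraction of a weak limit $G$; (iii) upgrading of the convergence through the extra regularity available away from the origin; (iv) passage to the distributional limit in the equation.

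For step (i), on any $K\Subset\B$ the Hardy weight $(1-|x|^2)^{-2}$ is bounded, so $\mathscr{L}_\al$ is uniformly elliptic with bounded zero-order coefficient on $K$. Testing the equation against a smooth cutoff and using the coercivity of $\mathscr{L}_\al$ (underwritten by Brezis--Marcus (\ref{bmmi})) together with $||u_\ve||_{\msh,\al}=1$, I would first extract a uniform $L^1_{\rm loc}$ bound for $c_\ve u_\ve$. The classical Brezis--Strauss / Stampacchia estimate for elliptic equations with $L^1$ right-hand side then delivers the desired uniform $W^{1,p}_{\rm loc}$ bound. Passing to a subsequence produces a weak limit $c_\ve u_\ve\rightharpoonup G$ in $W^{1,p}_{\rm loc}(\B)$, and the Rellich--Kondrachov theorem upgrades this to strong convergence in $L^q_{\rm loc}(\B)$ for every $q\geq 1$.

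Away from the origin the situation is much more tractable: since $u_0\equiv 0$ was already established after (\ref{cwq}) and $u_\ve\in\mss$ is radially non-increasing, the $L^p(\B)$ convergence $u_\ve\to 0$ forces $\sup_{|x|\geq r_0}u_\ve\to 0$ for every $r_0>0$. Consequently on $\overline{\B}\setminus\B_{r/2}(0)$ the function $f_\ve$ is smooth and uniformly bounded, and interior plus boundary elliptic regularity for $\mathscr{L}_\al$ (using the Dirichlet condition at $\p\B$ inherited from $\mss\subset C^0(\overline{\B})$) yields uniform $C^1$ bounds, hence $C^0(\overline{\B_r^c})$ convergence for every $0<r<1$. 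Pasting this uniform exterior convergence together with the interior strong $L^q_{\rm loc}$ convergence produces strong $L^q(\B)$ convergence for every $q\geq 1$, and passing to the distributional limit in $\mathscr{L}_\al(c_\ve u_\ve)=f_\ve$ yields $\mathscr{L}_\al G=\d_0$.

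The main obstacle will be obtaining the uniform $L^1_{\rm loc}$ bound on $c_\ve u_\ve$ that launches the $W^{1,p}_{\rm loc}$ estimate: the Hardy term $\f{c_\ve u_\ve}{(1-|x|^2)^2}$ in $\mathscr{L}_\al$ obstructs the most direct global duality argument, so the coercivity/positivity of $\mathscr{L}_\al$ has to be combined with the concentration structure of $f_\ve$ near $x=0$ in order to secure this preliminary bound.
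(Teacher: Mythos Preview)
Your overall four-step plan is sound and matches the spirit of the argument, but the obstacle you flag in your final paragraph is exactly where the paper supplies a concrete device that your sketch does not. Rather than attempting a direct Brezis--Strauss/Stampacchia estimate for $\mathscr{L}_\al$ on all of $\B$ (which fails because the Hardy coefficient $(1-|x|^2)^{-2}$ blows up at $\p\B$), the paper decomposes $c_\ve u_\ve=\phi\,\nu_\ve+w_\ve$, where $\nu_\ve$ solves $\mathscr{L}_\al\nu_\ve=f_\ve$ on the half-ball $\B_{1/2}$ with zero Dirichlet data, and $\phi\in C_0^\iy(\B_{1/4})$ is a cutoff equal to $1$ on $\B_{1/8}$. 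On $\B_{1/2}$ the Hardy weight is bounded, so the classical $L^1$-data theory applies directly and gives $\nu_\ve\rightharpoonup\nu_0$ weakly in $W^{1,q}_0(\B_{1/2})$ for $1<q<2$ without any preliminary $L^1$ bound on the solution. The remainder $w_\ve=c_\ve u_\ve-\phi\,\nu_\ve$ then satisfies $\mathscr{L}_\al w_\ve=$ (something supported away from $0$ and controlled), so it is bounded in $\msh$ itself and $w_\ve\rightharpoonup w_0$ in $\msh$; this is what handles the boundary behaviour that ordinary elliptic regularity cannot touch. The limit is $G=\phi\,\nu_0+w_0$. In short, the paper localises the $L^1$ theory to a region where the operator is genuinely uniformly elliptic and uses the Hilbert structure $\msh$ for everything else.

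Two further points where your sketch would need repair. First, in step~(iii) you assert that $f_\ve$ is uniformly bounded on $\overline{\B}\setminus\B_{r/2}$; but $f_\ve$ contains the factor $c_\ve u_\ve$, whose boundedness is precisely what you are proving, so as written this is circular. The fix is a bootstrap: once step~(ii) gives $c_\ve u_\ve$ bounded in $L^q_{\rm loc}$, you get $f_\ve$ bounded in $L^q_{\rm loc}(\B\setminus\{0\})$ (not $L^\iy$), and only after one round of interior elliptic estimates do you reach uniform pointwise bounds. Second, your invocation of ``boundary elliptic regularity for $\mathscr{L}_\al$'' at $\p\B$ does not make sense, since the zero-order coefficient is unbounded there; the $C^0(\overline{\B_r^c})$ convergence instead comes through the $\msh$-boundedness of $w_\ve$ together with the radial, non-increasing structure from $\mss$.
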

\begin{proof}
Note that
\be\label{cue}
\mathscr{L}_\al(c_\ve u_\ve)=\f{1}{\l_\ve}|x|^{-2\ba}c_\ve u_\ve e^{4\pi(1-\ba-\ve)u_\ve^2}.\ee
Denote $f_\ve=\f{1}{\l_\ve}|x|^{-2\ba}c_\ve u_\ve e^{4\pi(1-\ba-\ve)u_\ve^2}$. The rest of the proof is the same as in \cite{YZ15}. For completeness, we give the main steps.
Let $\nu_\ve$ be a solution to
\be\le\{\begin{array}{lll} \mss_\al \nu_\ve= f_\ve, \quad {\rm in} \quad \B_{1/2},\\[1.2ex]
\nu_\ve= 0\quad {\rm on} \quad \partial \B_{1/2}.
\end{array}\ri.\ee
Then for any $q$, $1<q<2$, there holds
$$\nu_\ve\rightharpoonup \nu_0\quad{\rm weakly\,\,in}\quad W^{1,q}_0(\B_{1/2}).$$
Set $w_\ve=c_\ve u_\ve-\phi \nu_\ve$, where $\phi$ is a cut-off function in $C^\iy_0(\B)$ with $0\leq \phi\leq 1$, $\phi\equiv 1$ on $\B_{1/8}$ and $\phi\equiv 0$ outside $\B_{1/4}$. Then
\be\label{omgc}
w_\ve\rightharpoonup w_0,\quad{\rm weakly\,\,in}\quad\msh,\ee
and $G=\phi\nu_0+w_0$.

\end{proof}

The Green function $G$ can be represented by
\be\label{gfr}
G=-\f{1}{2\pi}\log r+A_0+\Phi,
\ee
where $A_0$ is a constant and $\Phi\in C^1_{\rm loc}(\B)$.
\subsection{Upper bound estimates}

In this subsection, we will use Iula-Mancini's result to derive the upper bound of the Hardy-Moser-Trudinger functionals.
\begin{lemma}\label{IML16} (Iula-Mancini\cite{IM16})
Let $u_n\in W^{1,2}_0(\B)$ be such that $\ib |\na u_n|^2dx\leq 1$ and $u_n\rightharpoonup 0$ in $W^{1,2}_0(\B)$, then for any fixed $\ba$, $0\leq \ba<1$, we have
\be\label{imli}
\sup_{n\ra\iy}\ib\f{e^{4\pi(1-\ba)u_n^2}}{|x|^{2\ba}}dx\leq \f{\pi(1+e)}{1-\ba}.
\ee
\end{lemma}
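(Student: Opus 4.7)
The plan is to reduce the inequality to a one-dimensional problem on radially decreasing functions, and then use Onofri's inequality (in the form on $\mathbb{R}^2$ obtained by stereographic projection from $S^2$) to extract the sharp constant. The intuition is that the trivial term $\pi/(1-\beta)$ comes from the portion of $\B$ on which $u_n\to 0$ strongly, while the $\pi e/(1-\beta)$ term is the maximal contribution from a possible concentration point, Onofri providing the sharp exponential factor $e$.

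First, because the weight $|x|^{-2\beta}$ is radially symmetric and nonincreasing, the Hardy--Littlewood inequality gives
$$\ib \frac{e^{4\pi(1-\beta)u_n^2}}{|x|^{2\beta}}\,dx \;\leq\; \ib \frac{e^{4\pi(1-\beta)(u_n^\ast)^2}}{|x|^{2\beta}}\,dx,$$
where $u_n^\ast$ is the Schwarz symmetrization. Since $\|\nabla u_n^\ast\|_2\leq\|\nabla u_n\|_2\leq 1$ and $\|u_n^\ast\|_p=\|u_n\|_p\to 0$ for every $p\geq 1$, we still have $u_n^\ast\rightharpoonup 0$ in $W_0^{1,2}(\B)$. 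Thus I may assume $u_n$ is radial and nonincreasing. By radial Sobolev embedding, $u_n$ is uniformly bounded on $\overline{\B}\setminus\B_\delta$ for every $\delta\in(0,1)$, and combined with $u_n\to 0$ in $L^2$ this forces $u_n\to 0$ uniformly there. Consequently
$$\limsup_{n\to\infty}\int_{\B\setminus\B_\delta}\frac{e^{4\pi(1-\beta)u_n^2}}{|x|^{2\beta}}\,dx=\int_{\B\setminus\B_\delta}\frac{dx}{|x|^{2\beta}},$$
which tends to $\pi/(1-\beta)$ as $\delta\to 0$.

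Next I localize the exponential integral on $\B_\delta$ near the potential concentration at the origin. Performing the change of variable that absorbs the singular factor $|x|^{-2\beta}$ into the measure (essentially $r\mapsto r^{1-\beta}$, together with a logarithmic substitution), the remaining integral takes the shape of a standard non-singular Moser--Trudinger integral for a reparametrized function $\tilde u_n$ whose Dirichlet energy is bounded by $\int_{\B_\delta}|\nabla u_n|^2dx$. Since $u_n\rightharpoonup 0$, Onofri's inequality (in the Iula--Mancini formulation) applied to this reparametrization yields the logarithmic bound
$$\log\int_{\B_\delta}\frac{e^{4\pi(1-\beta)u_n^2}}{|x|^{2\beta}}\,dx\;\leq\;\log\frac{\pi}{1-\beta}+\int_{\B_\delta}|\nabla u_n|^2\,dx+ o_n(1),$$
the $o_n(1)$ absorbing the linear Onofri term which vanishes by $u_n\to 0$ strongly in $L^p$. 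Using $\int_{\B_\delta}|\nabla u_n|^2dx\leq 1$ converts this into the sharp bound $\frac{\pi e}{1-\beta}$ on the $\B_\delta$ contribution.

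Combining the two pieces and letting $\delta\to 0$ after $n\to\infty$ produces the claimed constant $\pi(1+e)/(1-\beta)$. The main difficulty is the sharp constant in the concentration step: making the reduction to an Onofri-type statement truly sharp requires carefully tracking how the Dirichlet energy splits between $\B_\delta$ and its complement, and controlling the linear term that appears after logarithmic integration using the weak convergence $u_n\rightharpoonup 0$; all other steps are relatively standard. The radial reduction and the away-from-origin estimate are routine.
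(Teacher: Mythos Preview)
The paper does not supply a proof of this lemma: it is stated as the result of Iula--Mancini \cite{IM16} and used as a black box in Lemma~\ref{ubd}. So there is no in-paper argument to compare against; I compare instead with the actual Iula--Mancini approach and with the simplest known route.

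Your overall strategy --- symmetrize via Hardy--Littlewood, then eliminate the singular weight by the radial substitution $r\mapsto r^{1-\beta}$, then appeal to the sharp non-singular concentration bound --- is the correct one. But two points deserve comment.

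First, the substitution $s=r^{1-\beta}$ works \emph{globally} on $\B$, not just on $\B_\delta$. For a radial $u$ with $\|\nabla u\|_2\le 1$, setting $v(s)=u(s^{1/(1-\beta)})$ and $w=\sqrt{1-\beta}\,v$ gives
\[
\int_{\B}\frac{e^{4\pi(1-\beta)u^2}}{|x|^{2\beta}}\,dx=\frac{1}{1-\beta}\int_{\B}e^{4\pi w^2}\,dx,\qquad \|\nabla w\|_{L^2(\B)}^2=\|\nabla u\|_{L^2(\B)}^2\le 1,
\]
and $w_n\rightharpoonup 0$ iff $u_n\rightharpoonup 0$. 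The singular lemma then follows in one line from the non-singular Carleson--Chang bound $\limsup_n\int_\B e^{4\pi w_n^2}dx\le \pi(1+e)$. Your separate treatment of $\B\setminus\B_\delta$ (uniform convergence to $0$) and $\B_\delta$ (concentration) merely re-derives the two pieces of Carleson--Chang after the change of variables, so it is correct but redundant.

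Second, the displayed inequality you attribute to ``Onofri's inequality (in the Iula--Mancini formulation)'',
\[
\log\int_{\B_\delta}\frac{e^{4\pi(1-\beta)u_n^2}}{|x|^{2\beta}}\,dx\le \log\frac{\pi}{1-\beta}+\int_{\B_\delta}|\nabla u_n|^2\,dx+o_n(1),
\]
is not Onofri's inequality in any standard form. Onofri is a statement about $e^{u}$ against a spherical measure with the linear mean of $u$ on the right, not about $e^{cu^2}$ against $|x|^{-2\beta}dx$ with the Dirichlet energy in the exponent. What you have written is essentially the \emph{conclusion} of the Carleson--Chang concentration lemma (after the change of variables), and proving it requires exactly the one-dimensional argument of \cite{CC86} --- which, in turn, can be viewed as equivalent to Onofri via Moser's logarithmic substitution, but that equivalence is the substance of the proof, not a citation. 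So the sketch is circular at this step unless you either (i) cite Carleson--Chang for the $\beta=0$ case and use the global substitution above, or (ii) actually carry out the one-dimensional estimate. Option (i) is the clean fix.
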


We proceed as in \cite{CC86,GY12} and get the following:
\begin{lemma}\label{ubd}
$$\sup_{u\in\msh,\,||u||_{\msh,\alpha}\leq 1}\int_{\mathbb{B}}\f{e^{4\pi(1-\ba)u^{2}}}{|x|^{2\beta}}dx\leq \f{\pi}{1-\ba}\le(1+e^{1+4\pi(1-\ba)A_0}\ri).$$

\end{lemma}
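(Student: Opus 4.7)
The plan is to run a three-scale Carleson--Chang capacity argument, splitting $\B$ at the radii $1,\d,R\d$ and applying the Iula--Mancini bound of Lemma \ref{IML16} to a rescaled function on the bubble scale. By Lemma \ref{Lemma 1}, the supremum equals $\lim_{\ve\to 0}\ib|x|^{-2\ba}e^{4\pi(1-\ba-\ve)u_\ve^2}dx$, and we may assume blow-up $c_\ve\to\iy$ as in the discussion following (\ref{cwq}). Fix small $\d>0$, then small $R>0$. On the annular region $\{R\d\leq|x|\leq 1\}$, the convergence $c_\ve u_\ve\to G$ in $C^0$ from Lemma \ref{con} gives $u_\ve\to 0$ uniformly, and dominated convergence yields
\[
\lim_{\ve\to 0}\int_{R\d\leq|x|\leq 1}\f{e^{4\pi(1-\ba-\ve)u_\ve^2}}{|x|^{2\ba}}dx=\f{\pi(1-(R\d)^{2-2\ba})}{1-\ba}.
\]

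For the bubble region $\B_{R\d}$, set $s_\ve:=u_\ve|_{\p\B_\d}$; combining Lemma \ref{con} with (\ref{gfr}) gives $c_\ve s_\ve=-\log\d/(2\pi)+A_0+\Phi(\d)+o_\ve(1)$. Let $\tilde u_\ve=u_\ve-s_\ve\in W^{1,2}_0(\B_\d)$ (non-negative by radial monotonicity) and $T_\ve^2:=\int_{\B_\d}|\na u_\ve|^2\,dx$. Rescale by $w_\ve(y):=\tilde u_\ve(\d y)/T_\ve$, so $w_\ve\in W^{1,2}_0(\B)$ has $\int_\B|\na w_\ve|^2\,dy=1$ and converges weakly to zero as $u_\ve$ concentrates at the origin. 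Lemma \ref{IML16} then yields $\int_\B|y|^{-2\ba}e^{4\pi(1-\ba)w_\ve^2}dy\leq\pi(1+e)/(1-\ba)+o_\ve(1)$; since $w_\ve\to 0$ uniformly on $\{|y|\geq R\}$, subtracting the tail contribution $\pi(1-R^{2-2\ba})/(1-\ba)$ and changing variables $x=\d y$ gives
\[
\int_{|x|\leq R\d}\f{e^{4\pi(1-\ba)w_\ve^2(x/\d)}}{|x|^{2\ba}}dx\leq \d^{2-2\ba}\cdot\f{\pi(e+R^{2-2\ba})}{1-\ba}+o_\ve(1).
\]

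To convert this into a bound on $u_\ve=T_\ve w_\ve(\cdot/\d)+s_\ve$, the essential input is the capacity expansion
\[
T_\ve^2=1-\f{c_\ve s_\ve}{c_\ve^2}\bigl(1+o_\ve(1)\bigr)+o_\ve(1),
\]
which I plan to derive by combining $\|u_\ve\|_{\msh,\al}^2=1$, $\|u_\ve\|_2=o_\ve(1)$, and an integration-by-parts identity for $\int_{\B\bs\B_\d}(|\na G|^2-G^2/(1-|x|^2)^2-\al G^2)\,dx$ produced from $\mathscr{L}_\al G=\d_0$, with the $\p\B_\d$ boundary term controlled via (\ref{gfr}). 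Choosing $\eta_\ve:=(1-T_\ve^2)/T_\ve^2\sim s_\ve/c_\ve$, the Young-type inequality $(a+b)^2\leq(1+\eta)a^2+(1+1/\eta)b^2$ gives $(1+\eta_\ve)T_\ve^2\leq 1$ and $(1+1/\eta_\ve)s_\ve^2=c_\ve s_\ve(1+o_\ve(1))$, whence
\[
4\pi(1-\ba)u_\ve^2\leq 4\pi(1-\ba)w_\ve^2(\cdot/\d)+4\pi(1-\ba)c_\ve s_\ve(1+o_\ve(1)).
\]
Exponentiating and noting $e^{4\pi(1-\ba)c_\ve s_\ve}\to\d^{-2(1-\ba)}e^{4\pi(1-\ba)(A_0+\Phi(\d))}$, the scale factors cancel exactly ($\d^{2-2\ba}\cdot\d^{-2(1-\ba)}=1$) and
\[
\limsup_{\ve\to 0}\int_{|x|\leq R\d}\f{e^{4\pi(1-\ba-\ve)u_\ve^2}}{|x|^{2\ba}}dx\leq e^{4\pi(1-\ba)(A_0+\Phi(\d))}\cdot\f{\pi(e+R^{2-2\ba})}{1-\ba}.
\]

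Adding the annular piece and letting $\ve\to 0$, then $R\to 0$, then $\d\to 0$ (so $\Phi(\d)\to\Phi(0)=0$ by the convention built into (\ref{gfr})) yields the stated bound $\f{\pi}{1-\ba}(1+e^{1+4\pi(1-\ba)A_0})$. The main obstacle is the capacity expansion of $T_\ve^2$: its leading $O(1/c_\ve^2)$ correction must come out exactly as $-c_\ve s_\ve/c_\ve^2$, because it is this precise coefficient that makes the Young cross-term $(1+1/\eta_\ve)s_\ve^2$ pair with the bubble part $e$ of the Iula--Mancini bound (rather than with the full $1+e$) and thereby delivers the sharp exponent $1+4\pi(1-\ba)A_0$ rather than $4\pi(1-\ba)A_0$ alone. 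Controlling the three pieces $\int|\na u_\ve|^2$, $\int u_\ve^2/(1-|x|^2)^2$, and $\al\int u_\ve^2$ to this precision through the Green's function identity, and verifying that the residual $o_\ve(1)$ terms really do vanish rather than conspiring with $c_\ve^2$-sized exponents, is where most of the technical work lies.
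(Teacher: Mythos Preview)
Your outline is sound and reaches the same bound as the paper, but the route differs in two places worth flagging.

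\textbf{How the paper argues.} The paper does \emph{not} cut at fixed macroscopic radii $R\d,\d$. It first invokes Lemma~\ref{lce} to reduce the whole supremum to $\pi/(1-\ba)+\limsup_{\ve\to 0}\l_\ve/c_\ve^2$, and then bounds only the bubble integral over $\B_{Rr_\ve^{1/(1-\ba)}}$ (microscopic scale, $R\to\infty$). On that bubble it does not use Young's inequality: it sets $\bar u=[u_\ve-u_\ve(\rho)]^+$, $s_\ve=\bar u/\|\na\bar u\|_{L^2(\B_\rho)}$, and expands directly, using $c_\ve^{-1}s_\ve\to 1$ on the bubble, to get the pointwise bound
\[
u_\ve^2\le s_\ve^2+2G(\rho)-F_\rho+o_\ve(1),\qquad F_\rho:=E_2(\rho)-E_1(\rho),
\]
with $E_1,E_2$ explicit integrals of $G$. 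Iula--Mancini (after the obvious rescaling $\B_\rho\to\B$) gives the factor $\rho^{2-2\ba}\pi e/(1-\ba)$, and then $\rho^{2-2\ba}e^{4\pi(1-\ba)(2G(\rho)-F_\rho)}\to e^{4\pi(1-\ba)A_0}$ as $\rho\to 0$, because $F_\rho=G(\rho)+O(\rho^2\log^2\rho)$.

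\textbf{Comparison.} Your annular piece plays the role of Lemma~\ref{lce}; your Young-inequality step replaces the paper's direct Taylor expansion. Both are legitimate. The paper's version is slightly cleaner because the correction term appears as $F_\rho$ from the start, so the final limit is just $\tfrac{1}{2\pi}\log\rho+2G(\rho)-F_\rho\to A_0$. In your scheme the same cancellation occurs, but note a wording issue: the expansion you need is
\[
1-T_\ve^2=\frac{F_\d+o_\ve(1)}{c_\ve^2},\qquad F_\d=G(\d)\bigl(1+o_\d(1)\bigr),
\]
so $1-T_\ve^2$ is \emph{not} $c_\ve s_\ve/c_\ve^2\,(1+o_\ve(1))$ for fixed $\d$; there is an $o_\d(1)$ multiplicative error. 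This produces an extra factor $e^{4\pi(1-\ba)G(\d)\,o_\d(1)}=e^{O(\d^2\log^2\d)}$ in your bubble bound, which does vanish when $\d\to 0$ --- but your write-up should keep the $o_\d(1)$ and $o_\ve(1)$ errors separate. Also drop the stray ``$+\,o_\ve(1)$'' at the end of your displayed formula for $T_\ve^2$: as written it would swamp the $O(1/c_\ve^2)$ term you are isolating.
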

\begin{proof} In view of (\ref{vgh}), we have
$$\int_{\B_{Rr_\ve^{1/(1-\ba)}}}|x|^{-2\ba} e^{4\pi(1-\ba-\ve)u_\ve^2}dx=\f{\l_\ve}{ c_\ve^2} \le(\int_{\B_R}|x|^{-2\ba}e^{8\pi(1-\ba)\varphi_0}dx+o_\ve(R)\ri).$$
Therefore,
$$\lim_{R\ra +\iy}\limsup_{\ve\ra 0}\int_{\B_{Rr_\ve^{1/(1-\ba)}}}|x|^{-2\ba} e^{4\pi(1-\ba-\ve)u_\ve^2}dx=\limsup_{\ve\ra 0}\f{\l_\ve}{c^2_\ve}.$$
Let $\rho \in (0,1)$. By Lemma \ref{con}, we have
\be
\lim_{\ve\ra 0}c_\ve u_\ve(\rho)=G(\rho)\ee
and
\be
\lim_{\ve\ra 0}\int_{\B_\rho}\le(\f{1}{(1-|x|^2)^2}+\al\ri)(c_\ve u_\ve)^2dx=\int_{\B_\rho}\le(\f{1}{(1-|x|^2)^2}+\al\ri)G^2dx=:E_1(\rho).
\ee
By (\ref{cue}), we have
\bna&&\int_{\B_\rho^c}\le(|\na(c_\ve u_\ve)|^2-\f{c_\ve^2u_\ve^2}{(1-|x|^2)^2}-\al c_\ve^2u^2\ri)dx\\
&&=||c_\ve u_\ve||^2_{\msh,\al}
-\int_{\B_\rho}\le(|\na(c_\ve u_\ve)|^2-\f{c_\ve^2u_\ve^2}{(1-|x|^2)^2}-\al c_\ve^2u^2\ri)dx\\
&&=\int_{\B_\rho^c}\l_\ve^{-1}|x|^{-2\ba}(c_\ve u_\ve)^2e^{4\pi(1-\ba-\ve)u_\ve^2}dx-\int_{\partial \B_\rho}\f{\partial(c_\ve u_\ve)}{\partial \nu}(c_\ve u_\ve)d\sigma.
\ena
Clearly,
$$\int_{\B_\rho^c}\l_\ve^{-1}|x|^{-2\ba}(c_\ve u_\ve)^2e^{4\pi(1-\ba-\ve)u_\ve^2}dx\ra 0, \quad {\rm as}\quad \ve\ra 0,$$
and
\bna-\int_{\partial \B_\rho}\f{\partial(c_\ve u_\ve)}{\partial \nu}(c_\ve u_\ve)d\sigma &=&-c_\ve u_\ve(\rho)\int_{\B_\rho}\D(c_\ve u_\ve)dx\\
&=& c_\ve u_\ve(\rho)\le(\int_{\B_\rho}\f{c_\ve u_\ve}{(1-|x|^2)^2}dx +\int_{\B_\rho}\al c_\ve u_\ve dx\ri)\\
&&+c_\ve u_\ve(\rho)\int_{\B_\rho}\l_\ve^{-1}|x|^{-2\ba}c_\ve u_\ve e^{4\pi(1-\ba-\ve)u_\ve^2}dx\\
&\ra & G(\rho)\le(\int_{\B_\rho}\f{G}{(1-|x|^2)^2}dx+\int_{\B_\rho}\al Gdx+1\ri)=:E_2(\rho).
\ena
Hence
\bea
\int_{\B_\rho}|\na u_\ve|^2dx &=&1-\int_{\B_\rho^c}\le(|\na u_\ve|^2-\f{u^2}{(1-|x|^2)^2}-\al u^2\ri)dx \nonumber\\
&&+\int_{\B_\rho}\le( \f{u_\ve^2}{(1-|x|^2)^2}+\al u_\ve^2\ri)dx\nonumber\\
&=&1-\f{1}{c_\ve^2}\le[E_2(\rho)-E_1(\rho)+o_\ve(1)\ri].
\eea
Let $F_\rho :=E_2(\rho)- E_1(\rho)$. Then
$$\int_{\B_\rho}|\na u_\ve|^2dx=1-\f{F_\rho+o_\ve(1)}{c_\ve^2}.$$
Let $\overline{u}=[u_\ve-u_\ve(\rho)]^+$ and $s_\ve=\overline{u}/||\na \overline{u}||_{L^2(\B_\rho)}$. Obviously, $s_\ve\in W_0^{1,2}(\B_\rho)$,
$||\na s_\ve||_2=1$ and $s_\ve\rightharpoonup 0$ in $W_0^{1,2}(\B_\rho)$.
By (\ref{pgo}), we have that $c_\ve^{-1}s_\ve\ra 1$ uniformly on $\B_{Rr_\ve^{1/(1-\ba)}}$.
Therefore, we have
\bna
u_\ve^2(x)&\leq &\le[ s_\ve(x)+u_\ve(\rho)\ri]^2||\na u_\ve||^2_{L^2(\B_\rho)}\\
&=& \le[s_\ve(x)+c_\ve^{-1}G(\rho)+o_\ve(c_\ve^{-1})\ri]^2\times\le[1-c_\ve^{-2}F_\rho +o_\ve(c_\ve^{-2})\ri]\\
&=& s_\ve^2(x)+2G(\rho)-F_\rho +o_\ve(1),
\ena
where $o_\ve(1)$ goes to $0$ uniformly in $\B_{Rr_\ve^{1/(1-\ba)}}$. According to  (\ref{imli}),  we get
\bna
\limsup_{\ve\ra 0}\int_{\B_{Rr_\ve^{1/(1-\ba)}}}|x|^{-2\ba}e^{4\pi(1-\ba-\ve)u_\ve^2}dx&\leq &\limsup_{\ve\ra 0}\int_{\B_{Rr_\ve^{1/(1-\ba)}}}|x|^{-2\ba}\le(e^{4\pi(1-\ba)u_\ve^2}-1\ri)dx\\
&\leq & e^{4\pi(1-\ba)(2G(\rho)-F_\rho)}\limsup_{\ve\ra 0}\int_{\B_{Rr_\ve^{1/(1-\ba)}}}|x|^{-2\ba}\le(e^{4\pi (1-\ba)s_\ve^2}-1\ri)dx\\
&\leq & e^{4\pi(1-\ba)(2G(\rho)-F_\rho)}\limsup_{\ve\ra 0}\int_{\B_\rho}|x|^{-2\ba}\le(e^{4\pi (1-\ba)s_\ve^2}-1\ri)dx\\
&\leq & \pi(1-\ba)^{-1} \rho^{2(1-\ba)}e^{1+4\pi(1-\ba)(2G(\rho)-F_\rho)}.
\ena
In view of (\ref{gfr}), we obtain
\bna\pi(1-\ba)^{-1} \rho^{2(1-\ba)}e^{1+4\pi(1-\ba)(2G(\rho)-F_\rho)}&=&\pi(1-\ba)^{-1}e^{1+4\pi(1-\ba)\le(\f{1}{2\pi}\log \rho+2G(\rho)-F_\rho\ri)}\\
&\ra&\pi(1-\ba)^{-1}e^{1+4\pi(1-\ba)A_0},
\ena as $\rho\ra 0$.
Combining Lemma \ref{lce}, we finish the proof.\end{proof}
\subsection{The existence result}
If $c_\ve$ is bounded, then our theorem holds true. If $c_\ve\ra +\iy$, we will construct a sequence of functions $\phi_\ve(x)\in \msh$ satisfying $||\phi_\ve(x)||_{\msh,\al}\leq 1$ and
 $$\ib |x|^{-2\ba}e^{4\pi(1-\ba)\phi_\ve^2}dx>\pi (1-\ba)^{-1}(1+e^{1+4\pi(1-\ba)A_0}).$$
 This is a contradiction. Then the proof of Theorem \ref{mai1} is completed since $c_\ve$ must be bounded.

Let

\be\label{tsf}\phi_\ve(x)=\le\{\begin{array}{lll}c+\f{1}{c}\le(-\f{1}{4\pi(1-\ba)}\log(1+\f{\pi}{1-\ba}\f{|x|^{2(1-\ba)}}{\ve^{2(1-\ba)}})+b\ri)
& x\in\overline{\B}_{R\ve},&\\[2ex]
\f{G(x)}{c} &x\in \B\bs\B_{R\ve},&
\end{array}\ri.\ee
where $R=(-\log \ve)^{1/(1-\ba)} $, $b$ and $c$ are constants to be determined later. We require
\be
c+\f{1}{c}\le(-\f{1}{4\pi(1-\ba)}\log(1+\f{\pi}{1-\ba}R^{2(1-\ba)})+b\ri)=\f{1}{c}\le(-\f{1}{2\pi}\log(R\ve)+A_0+O(R\ve)\ri),
\ee
which gives
\be\label{cr1}
c^2=-\f{1}{2\pi}\log \ve +A_0-b+\f{1}{4\pi(1-\ba)}\log\f{\pi}{1-\ba}+O(\f{1}{R^{2(1-\ba)}}).
\ee
By (\ref{omgc}),  $G=w_0$ on $\B\bs \B_{1/2}$. Noting $w_0\in \msh$ and $\phi_\ve-w_0/c\in W^{1,2}_0(\B)$, we get $\phi_\ve\in\msh$.
We have by integration by parts that
\bna
\int_{\B\bs \B_{R\ve}}\le( |\na \phi_\ve|^2-\f{\phi_\ve^2}{(1-|x|^2)^2}-\al\phi_\ve^2\ri)dx&=&\f{1}{c^2}\int_{\partial \B_{R\ve}}G\f{\partial G}{\partial \nu}d\sigma +\f{1}{c^2}\int_{\B\bs\B_{R\ve}}G\mathscr{L}_\al Gdx\\
&=&\f{1}{c^2}\le(-\f{1}{2\pi}\log(R\ve)+A_0+O(R\ve)\ri).\ena
A direct calculation   shows that
\be\label{cal1}
\int_{\B_{R\ve}}|\na \phi _\ve|^2dx=\f{1}{4\pi(1-\ba)c^2}\le( \log \f{\pi}{1-\ba}+\log R^{2-2\ba}-1+O(\f{1}{R^{2-2\ba}})\ri).
\ee
Refer to \cite{YZ16} for detailed calculations for (\ref{cal1}) and the following (\ref{cal2}).
Then we have
\bna
||\phi_\ve||^2_{\msh,\al}&\leq & \int_{\B\bs \B_{R\ve}}\le( |\na \phi_\ve|^2-\f{\phi_\ve^2}{(1-|x|^2)^2}-\al\phi_\ve^2\ri)dx+\int_{\B_{R\ve}}|\na \phi_\ve|^2dx\\
&=& \f{1}{c^2}\le(-\f{1}{2\pi}\log \ve+A_0+\f{1}{4\pi(1-\ba)}\log\f{\pi}{1-\ba}-\f{1}{4\pi(1-\ba)}+O(\f{1}{R^{2-2\ba}})\ri).
\ena
Letting the last term in above inequality equals $1$, we get
\be\label{cr2}
c^2=-\f{1}{2\pi}\log \ve+A_0+\f{1}{4\pi(1-\ba)}\log\f{\pi}{1-\ba}-\f{1}{4\pi(1-\ba)}+O(\f{1}{R^{2-2\ba}}).
\ee
Combining (\ref{cr1}) and (\ref{cr2}), we get
\be
b=\f{1}{4\pi(1-\ba)}+O(\f{1}{R^{2-2\ba}}).
\ee
We also have
\be\label{cal2}
\int_{\B_{R\ve}}|x|^{-2\ba}e^{4\pi(1-\ba)\phi_\ve^2}dx\geq  \f{\pi}{1-\ba}e^{1+4\pi(1-\ba)A_0}+O(\f{1}{R^{2-2\ba}}).
\ee
Next,  we calculate
\bna
\int_{\B\bs\B_{R\ve}}|x|^{-2\ba}e^{4\pi(1-\ba)\phi_\ve^2}dx&\geq &\int_{\B\bs\B_{R\ve}}|x|^{-2\ba}(1+4\pi(1-\ba)\phi_\ve^2)dx\\
&=&\int_{\B\bs\B_{R\ve}}|x|^{-2\ba}dx+\f{4\pi(1-\ba)}{c^2}\int_{\B\bs\B_{R\ve}}|x|^{-2\ba}G^2dx\\
&=&\int_{\B}|x|^{-2\ba}dx-\int_{\B_{R\ve}}|x|^{-2\ba}dx+\f{4\pi(1-\ba)}{c^2}\int_{\B}|x|^{-2\ba}G^2dx\\
&&-\f{4\pi(1-\ba)}{c^2}\int_{\B_{R\ve}}|x|^{-2\ba}G^2dx\\
&=&\f{\pi}{1-\ba}+\f{4\pi(1-\ba)}{c^2}\int_{\B}|x|^{-2\ba}G^2dx+O(\f{1}{R^{2-2\ba}}).
\ena
Noting $O(\f{1}{R^{2-2\ba}})=o(\f{1}{c^2})$  and combining the above estimates, we get
$$\ib |x|^{-2\ba}e^{4\pi(1-\ba)\phi_\ve^2}dx>\pi(1-\ba)^{-1}\le(1+e^{1+4\pi(1-\ba)A_0}\ri),$$
if $\ve$ is sufficiently small.

\end{document}